\def\@author#1{\g@addto@macro\elsauthors{\normalsize%
\def\baselinestretch{1}%
\upshape\authorsep#1\unskip\textsuperscript{%
\ifx\@fnmark\@empty\else\unskip\sep\@fnmark\let\sep=,\fi
\ifx\@corref\@empty\else\unskip\sep\@corref\let\sep=,\fi}%
\def\authorsep{\unskip,\space}%
\global\let\@fnmark\@empty
\global\let\@corref\@empty  
\global\let\sep\@empty}%
\@eadauthor={#1}
}
\newtheorem{theorem}{Theorem}[section]
\newtheorem{lemma}{Lemma}[section]
\newtheorem{example}{Example}[section]
\newtheorem{conjecture}{Conjecture}[section]
\newtheorem{remark}{Remark}[section]
\newtheorem{corollary}{Corollary}[section]
\def\dj{d\kern-0.4em\char"16\kern-0.1em}
\newcommand{\la}{\lambda}
\newcommand{\om}{\omega}
\newcommand{\x}{{\bf x}}
\newcommand{\y}{{\bf y}}
\newcommand{\mul}{{\rm mult}}
\newcommand{\nsg}{{\rm NSG}}
\newcommand{\dng}{{\rm DNG}}
\newcommand{\Sp}{{\rm Spec}}
\title[Vertex types in threshold and chain graphs]{\bf Vertex types in threshold and chain graphs}
\author{M. An\dj eli\'c}
\address{Department of Mathematics, Kuwait University, Safat 13060, Kuwait}
\email{milica@sci.kuniv.edu.kw}
\thanks{}
\author{E. Ghorbani}
\address{Department of Mathematics, K. N. Toosi University of Technology,  P. O. Box 16315-1618, Tehran, Iran, and
 School of Mathematics, Institute for Research in Fundamental
Sciences (IPM),  P. O. Box
19395-5746, Tehran, Iran}
\email{e\_ghorbani@ipm.ir}
\author{S.K. Simi\'c}
\address{State University of Novi Pazar, Vuka Karad\v{z}i\'{c}a bb, 36 300 Novi Pazar,
Serbia, and Mathematical Institute SANU, Kneza Mihaila 36,
11 000 Belgrade, Serbia} \email{sksimic@mi.sanu.ac.rs}
\thanks{}
\keywords{adjacency spectrum, chain graphs, threshold graphs, vertex
types}
\dedicatory{In honour of Domingos M. Cardoso on the occasion of his
65th birthday}
 \subjclass[2000]{05C50}
\begin{document}

\begin{abstract}
A graph is called a chain graph if it is bipartite and the
neighborhoods of the vertices in each color class form a chain with
respect to inclusion. A threshold graph can be obtained from a chain
graph by making adjacent all pairs of vertices in one color class.
Given a  graph $G$, let $\la$ be an eigenvalue (of the adjacency
matrix) of $G$ with multiplicity $k \geq 1$. A vertex $v$ of $G$ is
a downer, or neutral, or  Parter depending whether the multiplicity
of $\la$ in $G-v$
 is $k-1$, or $k$, or $k+1$, respectively. We consider vertex types in the above sense in threshold and chain graphs. In particular,
 we show that chain graphs can have neutral vertices, disproving a conjecture by Alazemi {\em et
 al.}

\end{abstract}

\maketitle

\section{Introduction}

\noindent This paper is a successor of \cite{AFGS} in which vertex
types (see the Abstract) in the lexicographic products of an
arbitrary graph over cliques and/or co-cliques were investigated.
Such class of graphs includes threshold graphs and chain graphs as
particular instances.
 Both of
these types (or classes) of graphs were discovered, and also
rediscovered by various researchers in different contexts (see, for
example, \cite{bcrs, bfp,hpx}, and references therein). Needles to
say, they were named by different names mostly depending on
applications in which they arise. It is also noteworthy that
threshold  graphs are subclass of cographs, i.e. of $P_4$-free
graphs. Recall that threshold graphs are $\{P_4,2K_2,C_4\}$-free
graphs, while chain graphs are $\{2K_2,C_3,C_5\}$-free graphs - see
\cite{AlAS, Pt-srb} for more details.   Note, if these graphs are
not connected then (since $2K_2$ is forbidden) at most one of its
components is non-trivial (others are trivial, i.e. isolated
vertices). Moreover, stars are the only connected graphs which
belong to both  of two classes of graphs under consideration.

Recall, these graphs play a very important role in Spectral Graph
Theory, since the maximizers for the largest eigenvalue of the
adjacency matrix (for graphs of fixed order and size, either
connected or disconnected) belong to these classes (threshold graphs
in general case, and chain graphs in bipartite case). Such graphs
(in both classes) have a very specific structure (embodied in
nesting property), and this fact enables us to tell more on the type
of certain vertices. Here, we also disprove Conjecture 3.1 from
\cite{Pt-srb}.

Throughout, we will consider simple graphs, i.e. finite undirected
graphs without loops or multiple edges.  In addition, without loss
of generality, we will assume that any such graph is connected. For
a graph $G$ we denote its vertex set by $V(G)$, and  by $n=|V(G)|$
its {\em order}.
An $n \times
n$ matrix $A(G) = [a_{ij}]$ is its {\em adjacency matrix} if $a_{ij}
= 1$ whenever vertices $i$ and $j$ are adjacent, or $a_{ij} = 0$
otherwise. For a vertex $v$ of $G$, let $N(v)$ denote the {\em neighborhood} of $v$, i.e.   the set of
all vertices of $G$ adjacent to $v$.

The eigenvalues of $G$ are the eigenvalues of its adjacency matrix.
In non-increasing order they are denoted by
\[
\lambda_1(G)\geq \lambda_2(G) \geq \cdots \geq \lambda_n(G),
\]
or by
\[
 \mu_1(G) > \mu_2(G)> \cdots > \mu_r(G)
 \]
if only distinct eigenvalues are considered.  If understandable from
the context we will drop out graph names from the notation of
eigenvalues (or other related objects). The eigenvalues comprise
(together with multiplicities, say $k_1,k_2,\ldots,k_r$,
respectively) the spectrum of $G$, denoted by $\Sp(G)$. The {\em
characteristic polynomial} of $G$, denoted by $\phi(x;G)$, is the
characteristic polynomial of its adjacency matrix. Both, the
spectrum and characteristic polynomial of a graph $G$ are its
invariants. Further on, all spectral invariants (and other relevant
quantities) associated to the adjacency matrix will be prescribed to
the corresponding graph. For a given eigenvalue $\la \in \Sp(G)$,
$\mul(\la,G)$ denotes its multiplicity, while ${\mathcal E}(\la;G)$
its eigenspace (provided $G$ is a labeled graph). The equation $A
{\mathbf x} = \la {\mathbf x}$, is called the eigenvalue equation
for $\la$. Here $A$ is the adjacency matrix, while  ${\mathbf x}$ a
$\la$-eigenvector also of the labeled graph $G$. If $G$ is of order
$n$, then ${\mathbf x}$ can be seen as an element of ${\mathbb
R}^n$, or a mapping ${\mathbf x} : V(G) \rightarrow {\mathbb R}^n$
(so its $i$-th entry can be denoted by $x_i$ or ${\mathbf x}(i)$).
Eigenspaces (as the eigenvector sets) are not graph invariants,
since the eigenvector entries become permuted if the vertices of $G$
are relabeled.

An eigenvalue $\la \in \Sp(G)$ is {\em main}  if the corresponding eigenspace ${\mathcal E}(\la;G)$ is not orthogonal to
all-$1$ vector $\mathbf{j}$; otherwise, it is {\em non-main}.

Given a graph $G$, let $\la$ be its eivgenvalue of multiplicity $k
\geq 1$ and $v\in V(G)$. Then $v$ is a {\em downer}, or {\em
neutral}, or {\em Parter} vertex of $G$, depending whether the
multiplicity of $\la$ in $G-v$ is $k-1$, or $k$, or $k+1$,
respectively. Recall, neutral and Parter vertices of $G$ are also
called {\em Fiedler} vertices. For more details, about the above
vertex types see, for example, \cite{SAFZ}.

\begin{remark} Sum rule: Let $\x$ be a $\la$-eigenvector of a graph $G$. Then the entries of $\x$ satisfy the following equalities:
\begin{equation}\label{sumrule}
\la\x(v)=\sum_{u\sim v}\x(u),~~\hbox{for all}~v\in V(G).
\end{equation}
From \eqref{sumrule} it follows that if $\la\ne0$, then $N(u)=N(v)$
implies that $\x(u)=\x(v)$ and if $\la\ne-1$,
$N(u)\cup\{u\}=N(v)\cup\{v\}$ implies that $\x(u)=\x(v)$. 
\end{remark}

In sequel, we will need the following interlacing property for graph eigenvalues (or, eigenvalues of Hermitian matrices, see \cite[Theorem~2.5.1]{bh}).

\begin{theorem}\label{inter}
Let $G$ be a graph of order $n$ and $G'$ be an induced subgraph of $G$ of order $n'$. If
$\la_1 \ge \la_2\ge \cdots \ge \la_n$ and
$\la'_1 \ge \la'_{2} \ge \cdots \ge \la'_{n'}$ are their eigenvalues respectively, then
\begin{equation}{\label{jed}}
\la_i \ge \la'_i \ge \la_{n-n'+i}~~~\hbox{for}~ i=1,2, \ldots,n'.
\end{equation}
In particular, if $n'=n-1$, then $$\la_1\ge\la'_1\ge\la_2\ge\la'_2\ge\cdots\ge\la_{n-1}\ge\la'_{n-1}\ge\la_n.$$
\end{theorem}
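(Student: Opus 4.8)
\emph{Proof plan for Theorem~\ref{inter}.} This is the classical Cauchy interlacing theorem, phrased here for adjacency matrices. The key structural observation is that, since $G'$ is an \emph{induced} subgraph of $G$, the matrix $A(G')$ is exactly the principal submatrix of $A(G)$ obtained by deleting the $n-n'$ rows and columns indexed by $V(G)\setminus V(G')$. The plan is to prove the case $n'=n-1$ (a single deleted vertex) first, and then to obtain the general statement by applying that case $n-n'$ times along a chain of vertex-deleted subgraphs.

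For the single-vertex case I would invoke the Courant--Fischer min--max characterization of the eigenvalues of a real symmetric matrix $A$ of order $n$: for every $i$,
\begin{align*}
\la_i&=\max_{\dim S=i}\ \min_{0\ne x\in S}\frac{x^{\top}Ax}{x^{\top}x}\\
     &=\min_{\dim S=n-i+1}\ \max_{0\ne x\in S}\frac{x^{\top}Ax}{x^{\top}x},
\end{align*}
where $S$ runs over the subspaces of $\mathbb{R}^n$ of the indicated dimension. Assume without loss of generality that the deleted vertex is the last one, so that $A(G')$ occupies the upper-left $(n-1)\times(n-1)$ block of $A=A(G)$, and identify $\mathbb{R}^{n-1}$ with the hyperplane $W=\{x\in\mathbb{R}^n:x_n=0\}$; then $x^{\top}Ax=x^{\top}A(G')x$ and the Rayleigh quotients coincide for all $x\in W$. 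To get $\la_i\ge\la'_i$, write $\la'_i$ in the max--min form: it is a maximum, over the $i$-dimensional subspaces \emph{contained in} $W$, of the minimal Rayleigh quotient; since these subspaces form a subfamily of all $i$-dimensional subspaces of $\mathbb{R}^n$, the maximum can only decrease, giving $\la'_i\le\la_i$. To get $\la'_i\ge\la_{i+1}$, write $\la'_i$ in the min--max form: it is a minimum, over the $(n-i)$-dimensional subspaces contained in $W$, of the maximal Rayleigh quotient; again these form a subfamily of all $(n-i)$-dimensional subspaces of $\mathbb{R}^n$, so the minimum can only increase, giving $\la'_i\ge\la_{i+1}$. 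This is precisely the ``in particular'' chain $\la_1\ge\la'_1\ge\la_2\ge\cdots\ge\la'_{n-1}\ge\la_n$.

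For the general case, delete the vertices of $V(G)\setminus V(G')$ one at a time, producing induced subgraphs $G=G^{(0)}\supset G^{(1)}\supset\cdots\supset G^{(n-n')}=G'$, each obtained from the previous one by removing a single vertex. Iterating $\la_i(G^{(j)})\le\la_i(G^{(j-1)})$ yields $\la'_i\le\la_i$, and iterating $\la_i(G^{(j)})\ge\la_{i+1}(G^{(j-1)})$ yields $\la'_i\ge\la_{i+(n-n')}(G)=\la_{n-n'+i}$, which is meaningful exactly when $i+(n-n')\le n$, i.e.\ $i\le n'$. Combining, $\la_i\ge\la'_i\ge\la_{n-n'+i}$ for $i=1,\dots,n'$.

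The only real difficulty here is bookkeeping: one must keep the inequality directions straight when the family of admissible test subspaces is shrunk (a maximum over fewer subspaces drops, a minimum over fewer subspaces rises) and check that the index range $i\le n'$ is exactly what keeps $\la_{n-n'+i}$ well defined. If a fully self-contained argument is wanted, the remaining ingredient is Courant--Fischer itself, which follows from the spectral theorem: the lower bound on $\la_i$ is witnessed by the span of the top $i$ eigenvectors, and the upper bound by a dimension count showing that any $i$-dimensional subspace meets the span of the bottom $n-i+1$ eigenvectors nontrivially.
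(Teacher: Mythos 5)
Your proof is correct. Note that the paper does not prove this statement at all: it is quoted as the classical Cauchy interlacing theorem with a citation to Brouwer--Haemers, \emph{Spectra of Graphs}, Theorem~2.5.1, so there is no in-paper argument to compare against. Your route --- establish the codimension-one case via the two Courant--Fischer characterizations (restricting the test subspaces to the hyperplane $x_n=0$ shrinks a maximum and raises a minimum), then iterate along a chain of single-vertex deletions to get $\la'_i\ge\la_{i+(n-n')}=\la_{n-n'+i}$ for $i\le n'$ --- is sound, and the index bookkeeping is handled correctly. The cited reference proves the general statement in one step for an arbitrary compression $B=S^{\top}AS$ with $S^{\top}S=I$, using a dimension-count intersection of eigenvector spans rather than iteration; that version is slightly more general (it covers quotient matrices as well as principal submatrices), but for induced subgraphs the two arguments are equivalent in substance, both resting on the min--max principle.
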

In the case of equality in (\ref{jed}) (see
\cite[Theorem~2.5.1]{bh}) the following holds.

\begin{lemma}\label{eqinter} If $\la'_i=\la_i$ or $\la'_i=\la_{n-n'+i}$
for some $i \in \{1,2,\ldots,n'\}$, then $G'$ has an eigenvector $\x'$ for $\la'_i$ such that
$\begin{pmatrix}\bf0 \\ \x'\end{pmatrix}$ is an eigenvector of $G$ for $\la'_i$, where
 $\bf0$ is a zero vector whose entries correspond to the vertices from  $V(G)\setminus V(G')$.
\end{lemma}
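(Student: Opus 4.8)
The plan is to reduce the lemma to a statement about a real symmetric matrix and one of its principal submatrices. Relabel the vertices so that the $n'$ vertices of $G'$ come last, and write $A=A(G)$ in block form
\[
A=\begin{pmatrix} C & D^{\mathsf{T}}\\ D & B\end{pmatrix},
\]
with $C$ of order $m:=n-n'$ and $B=A(G')$. Let $\iota\colon\mathbb{R}^{n'}\to\mathbb{R}^n$ be the embedding $\iota(\y)=\begin{pmatrix}\mathbf{0}\\ \y\end{pmatrix}$, where $\mathbf{0}\in\mathbb{R}^m$. I would first record two elementary facts: $\iota(\y)^{\mathsf{T}}A\,\iota(\y)=\y^{\mathsf{T}}B\,\y$ for all $\y$, and, by a block computation, $A\,\iota(\y)=\begin{pmatrix} D^{\mathsf{T}}\y\\ B\y\end{pmatrix}$. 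The second identity shows that $\iota(\y)$ is a $\la$-eigenvector of $G$ \emph{if and only if} $B\y=\la\y$ and $D^{\mathsf{T}}\y=\mathbf{0}$; in particular $\y$ is then automatically a $\la$-eigenvector of $G'$. So it suffices, in the equality case, to exhibit a $\la'_i$-eigenvector $\y$ of $B$ that also lies in $\ker D^{\mathsf{T}}$.

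Next I would isolate the key linear-algebra step: if $\mu\in\mathbb{R}$, $S\subseteq\mathbb{R}^n$ is a subspace of dimension $i$ with $\x^{\mathsf{T}}A\,\x\ge\mu\,\x^{\mathsf{T}}\x$ for all $\x\in S$, and moreover $\la_i(A)=\mu$, then $S$ meets the $\mu$-eigenspace $\E(\mu;A)$ nontrivially. To prove this, decompose $\mathbb{R}^n=U^{+}\oplus\E(\mu;A)\oplus U^{-}$ into the mutually orthogonal $A$-invariant subspaces spanned by eigenvectors with eigenvalue $>\mu$, $=\mu$, and $<\mu$, and let $P^{+}$ be the orthogonal projection onto $U^{+}$. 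Expanding $\x^{\mathsf{T}}(A-\mu I)\x$ along this decomposition (the cross terms vanish by orthogonality and invariance) shows that any $\x\in S$ with $P^{+}\x=\mathbf{0}$ must also have zero $U^{-}$-component, i.e. $\x\in\E(\mu;A)$; hence $S\cap\ker P^{+}\subseteq\E(\mu;A)$. Since $\la_i(A)=\mu$ forces fewer than $i$ eigenvalues of $A$ to exceed $\mu$, we have $\dim\ker P^{+}=n-\dim U^{+}\ge n-i+1$, so $\dim(S\cap\ker P^{+})\ge1$, and the step is done.

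With these in hand I would finish as follows. Assume first $\la'_i=\la_i=:\mu$. By the Courant--Fischer max--min characterization of $\la'_i$ there is a subspace $T\subseteq\mathbb{R}^{n'}$ with $\dim T=i$ on which $\y^{\mathsf{T}}B\,\y\ge\mu\,\y^{\mathsf{T}}\y$. Put $S:=\iota(T)$; then $\dim S=i$ and, by the first identity above, $\x^{\mathsf{T}}A\,\x\ge\mu\,\x^{\mathsf{T}}\x$ on $S$. Since $\la_i(A)=\mu$, the key step supplies a nonzero $\x\in S\cap\E(\mu;A)$; writing $\x=\iota(\y)$ and comparing $A\,\iota(\y)=\begin{pmatrix}D^{\mathsf{T}}\y\\ B\y\end{pmatrix}$ with $\mu\,\iota(\y)=\begin{pmatrix}\mathbf{0}\\ \mu\y\end{pmatrix}$ gives $B\y=\mu\y$ and $D^{\mathsf{T}}\y=\mathbf{0}$, so $\x':=\y$ is the desired eigenvector. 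For the other case $\la'_i=\la_{n-n'+i}$, I would apply what has just been proved to $-A$ and $-B$: the eigenvalues of $-A$ are $-\la_n\ge\cdots\ge-\la_1$ and those of $-B$ are $-\la'_{n'}\ge\cdots\ge-\la'_1$, and with $j:=n'+1-i$ the hypothesis becomes that the $j$-th eigenvalue of $-B$ equals the $j$-th eigenvalue of $-A$; the common eigenvector produced for $-A$ is, up to the sign of the eigenvalue, a common eigenvector for $A$ with eigenvalue $\la'_i$.

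The part I expect to be the main obstacle is precisely the dimension count in the key step. The naive idea---to intersect $S$ with the image under $\iota$ of the span of the ``small'' eigenvectors of $B$ inside $\mathbb{R}^n$---fails as soon as $m=n-n'>0$, because deleting $m$ vertices can move up to $m$ eigenvalues past $\mu$. The remedy is to intersect $S$ with $\ker P^{+}$ rather than with any subspace coming from $B$, and to use the \emph{sharp} hypothesis $\la_i(A)=\mu$ (as opposed to merely $\mu\in\Sp(A)\cap\Sp(G')$), since it is exactly this that bounds $\dim U^{+}$ by $i-1$ and thereby forces the intersection to be nontrivial.
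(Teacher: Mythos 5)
Your proof is correct. Note that the paper does not actually prove Lemma~\ref{eqinter}: it is quoted from Brouwer--Haemers \cite[Theorem~2.5.1]{bh}, so there is no in-text argument to match. What you give is a complete, self-contained proof of the tight-interlacing statement, and it is sound: the block identity $A\,\iota(\y)=\bigl(\begin{smallmatrix}D^{\mathsf{T}}\y\\ B\y\end{smallmatrix}\bigr)$ correctly reduces the claim to finding a $\la'_i$-eigenvector of $B$ in $\ker D^{\mathsf{T}}$; the key step is proved properly (the cross terms in $\x^{\mathsf{T}}(A-\mu I)\x$ vanish, the sign analysis forces $\x^-=\mathbf{0}$, and the count $\dim U^{+}\le i-1$ gives $\dim(S\cap\ker P^{+})\ge 1$); and the reduction of the case $\la'_i=\la_{n-n'+i}$ to the first case via $-A,-B$ with $j=n'+1-i$ checks out. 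Your route differs in organization from the standard Brouwer--Haemers argument, which for each $i$ constructs a specific nonzero test vector $s_i$ in the span of the top $i$ eigenvectors of $B$ whose zero-extension is orthogonal to the top $i-1$ eigenvectors of $A$, and in the equality case traces equality through the resulting chain of Rayleigh-quotient inequalities to conclude that $s_i$ and its extension are eigenvectors. The two arguments are essentially equivalent in content; yours isolates more explicitly why the \emph{sharp} equality $\la_i(A)=\la'_i$ (rather than mere membership of $\la'_i$ in both spectra) is what makes the intersection nonempty, which is a worthwhile clarification, at the cost of invoking Courant--Fischer (though even that can be avoided by taking $T$ to be the span of the top $i$ eigenvectors of $B$ directly).
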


\begin{remark}\label{downer} A vertex $v$ is a downer
for a fixed eigenvalue $\lambda$, if there exists in the
corresponding eigenspace an eigenvector whose $v$-th component is
non-zero. Otherwise, it is a Fiedler vertex. Let $W$ be the
eigenspace corresponding to $\la$. If for each $\x\in W$, we have
$\x(v)=0$, then $v$ cannot be a downer vertex as for any $\x\in W$,
the vector $\x'$ obtained by deleting the $v$-th component, is a
$\la$-eigenvector of $G-v$, and therefore we have
  $$\mul(\la,G-v)\ge\dim\,\{\x':\x\in W\}=\dim W=\mul(\la,G).$$
From this and Lemma~\ref{eqinter} it follows, if $\mul(\la, G)=1$
that there exists a $\la$-eigenvector $\x$ with $\x(v)=0$ if and
only if $v$ is not a downer vertex for $\la$.
\end{remark}

\bigskip
The rest of the paper is organized as follows: in Section  \ref{th}
we give some particular results about vertex types in threshold
graphs, while in Section~\ref{cg} we put focus on chain graphs, and
among others we disprove  Conjecture 3.1 from \cite{AlAS}, which
states that in any chain graph, every vertex is a downer with
respect to every non-zero eigenvalue. Besides we point out that some
weak versions of the same conjecture are true.

\section{Vertex types in threshold graphs}\label{th}

\noindent Any (connected) threshold graph $G$ is a split graph i.e.,
it admits a partition of its vertex set into two subsets, say $U$
and $V$, such that the vertices of $ U$ induce a co-clique, while
the vertices of $ V$  induce a clique. All other edges join a vertex
in $U$ with a vertex in $V$. Moreover, if $G$ is connected, then
both $U$ and $V$ are partitioned into $h\geq 1$ non-empty cells such
that $U=\bigcup_{i=1}^h U_i$ and $V=\bigcup_{i=1}^h V_i$ and  the
following holds for (cross) edges: each vertex in $U_i$ is adjacent
to all vertices in $V_1\cup\cdots\cup V_i$ (a nesting property).
Accordingly, connected threshold graphs are also called {\em nested
split graphs} (or NSG for short). If $m_i = |U_i|$ and $n_i =|V_i|$,
then we write
\begin{equation}{\label{nsg}}
G =\nsg(m_1,\ldots , m_h; n_1, \ldots, n_h),
\end{equation}
(see Fig. \ref{thrg}). We denote by $M_h$ ($=\sum_{i=1}^h m_i$) the
size of $U$, and by $N_h$ ($=\sum_{i=1}^h n_i$) the size of $V$.


\begin{figure}[hbtp]
\centering

\scalebox{1.0}{
\begin{tikzpicture}[line width=0.8pt]
\tikzset{every node/.style={draw,shape=circle,minimum height=0.4cm,minimum width=0.4cm,inner sep=0pt,fill=none}}

\foreach \x/\i in {1/1,2.2/2,5/3,6.2/4} {\node[anchor=center] at (1,\x) (p\i) {};}
\node [draw=none,anchor=east] at (1,0.62) {\small$U_h$};
\node [draw=none,anchor=east] at (1,1.83) {\small$U_{h-1}$};
\node [draw=none,anchor=east] at (1,4.65) {\small$U_{2}$};
\node [draw=none,anchor=east] at (0.95,5.9) {\small$U_1$};
\node [draw=none,anchor=center] at (0.7,1.32) {\small$m_{h}$};
\node [draw=none,anchor=center] at (0.7,2.52) {\small$m_{h-1}$};
\node [draw=none,anchor=center] at (0.8,5.35) {\small$m_{2}$};
\node [draw=none,anchor=center] at (0.65,6.5) {\small$m_{1}$};
\foreach \x/\i in {1/1,2.2/2,5/3,6.2/4} {\node[anchor=center,fill] at (4.2,\x) (q\i) {};}
\node [draw=none,anchor=west] at (4.25,0.62) {\small$V_h$};
\node [draw=none,anchor=west] at (4.25,1.83) {\small$V_{h-1}$};
\node [draw=none,anchor=west] at (4.25,4.6) {\small$V_{2}$};
\node [draw=none,anchor=west] at (4.25,5.8) {\small$V_1$};
\node [draw=none,anchor=west] at (4.3,1.3) {\small$n_{h}$};
\node [draw=none,anchor=west] at (4.3,2.5) {\small$n_{h-1}$};
\node [draw=none,anchor=west] at (4.3,5.3) {\small$n_{2}$};
\node [draw=none,anchor=west] at (4.3,6.5) {\small$n_{1}$};
\foreach \i in {1,2,4} {\draw (p1) -- (q\i);}
\draw (p1.40) -- (q3.245);
\draw (p2) -- (q2);
\draw (p2.25) -- (q3);
\draw (p2) -- (q4.215);
\foreach \i in {3,4} {\draw (p3) -- (q\i);}
\draw (p4) -- (q4);
\draw (q3) -- (q4);
\draw (q1) -- (q2);
\draw (q1) to[out=0,in=0,looseness=0.75] (q4);
\draw (q2) to[out=0,in=340,looseness=0.75] (q4);
\draw (q1) to[out=160,in=190,looseness=0.6] (q3);
\draw (q2) to[out=160,in=205,looseness=0.6] (q3);
\draw (q2) -- ($(q2) + (0,0.8)$);
\draw (q3) -- ($(q3) + (0,-0.8)$);
%
\fill (1,3.9) circle (1.2pt) ++(0,-0.3) circle (1.2pt) ++(0,-0.3) circle (1.2pt);
\fill (4.2,3.9) circle (1.2pt) ++(0,-0.3) circle (1.2pt) ++(0,-0.3) circle (1.2pt);

\end{tikzpicture}
}

\caption{The threshold graph $G = \nsg(m_1,\ldots , m_h; n_1,
\ldots, n_h)$.} \label{thrg}
\end{figure}
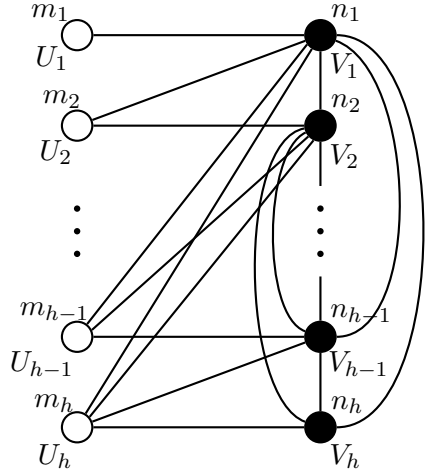

The following Theorem states the essential spectral properties of
threshold graphs (see \cite{AlAS, trev2, SciFar}).

\begin{theorem}\label{nsg_mult}
Let $G=\nsg(m_1, \ldots, m_h; n_1, \ldots, n_h)$. Then the spectrum
of $G$ contains:
\begin{itemize}
\item  $h$ positive simple eigenvalues;
\item $h-1$ simple eigenvalues less than $-1$ if $m_h=1$, or
otherwise if $m_h\geq 2$, $h$ simple eigenvalues less than $-1$;
\item eigenvalue $0$ of multiplicity $M_h-h$, and $-1$ of multiplicity $N_h-h+1$ if $m_h=1$, or of multiplicity
$N_h-h$ if $m_h > 1$.
\end{itemize}
In addition, if $\lambda \neq 0,-1$ then $\lambda$ is a main eigenvalue.
\end{theorem}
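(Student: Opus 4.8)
The plan is to determine the spectrum of $G=\nsg(m_1,\ldots,m_h;n_1,\ldots,n_h)$ by exploiting the strong equitable partition coming from the cells $U_1,\ldots,U_h,V_1,\ldots,V_h$, together with the ``twin'' structure inside each cell. First I would observe that any two vertices in the same cell $U_i$ are non-adjacent and have the same neighborhood; hence by the sum rule (equation~\eqref{sumrule}) every eigenvector for $\lambda\ne0$ must be constant on each $U_i$, and the space of vectors supported on $U_i$ that are orthogonal to the constant vector contributes to the eigenvalue $0$. This accounts for eigenvalue $0$ with multiplicity at least $\sum_i(m_i-1)=M_h-h$. Similarly, any two vertices in the same cell $V_i$ are adjacent and satisfy $N(u)\cup\{u\}=N(v)\cup\{v\}$, so for $\lambda\ne-1$ every eigenvector is constant on each $V_i$, and the ``difference'' vectors supported on a single $V_i$ are eigenvectors for $-1$; this gives $-1$ with multiplicity at least $\sum_i(n_i-1)=N_h-h$. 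When $m_h=1$ one extra $(-1)$-eigenvector appears because the single vertex of $U_h$ has neighborhood $V_1\cup\cdots\cup V_h$, which together with that vertex equals $N(v)\cup\{v\}$ for $v\in V_h$; I would exhibit this extra vector explicitly (supported on $U_h\cup V_h$) to get multiplicity $N_h-h+1$ in that case.

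Next I would pass to the quotient matrix $B$ of the equitable partition into the $2h$ cells; its characteristic polynomial divides $\phi(x;G)$, and the remaining $2h$ eigenvalues of $G$ (counted with multiplicity) are exactly the eigenvalues of $B$. A dimension count then forces equality everywhere: $n=\sum m_i+\sum n_i=(M_h-h)+(N_h-h)+2h$ (or the analogous count when $m_h=1$, where one of the $2h$ quotient eigenvalues is itself $-1$ or $0$), so the multiplicities of $0$ and $-1$ are exactly as claimed and the other eigenvalues are simple. It remains to locate the $2h$ eigenvalues of $B$: I would argue that $G$ contains an induced $\nsg$ on one vertex per cell (a graph on $2h$ vertices with the same nesting pattern), note that $B$ is similar to a symmetric matrix via the diagonal scaling $D=\mathrm{diag}(\sqrt{m_i},\sqrt{n_i})$, and then use a sign/interlacing argument: the clique part contributes eigenvalues, and by an explicit factorization (or by induction on $h$, peeling off the innermost cell) show that $B$ has exactly $h$ eigenvalues that are positive and $h-1$ (resp.\ $h$) that are less than $-1$, the dichotomy depending on whether the apex cell $U_h$ is a single vertex.

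For the main-eigenvalue claim, I would use that the quotient eigenvectors are precisely the eigenvectors of $G$ that are constant on cells, hence lie in the span of the cell-indicator vectors, a subspace that is \emph{not} orthogonal to $\mathbf j$; more carefully, an eigenvalue $\lambda\ne0,-1$ is simple with an eigenvector constant on cells, and one checks that such a vector cannot be orthogonal to $\mathbf j$ unless all its cell-values vanish, which is impossible for an eigenvector. (Equivalently, invoke the standard fact that the main eigenvalues of $G$ are exactly the eigenvalues of the quotient matrix for the equitable partition that are ``seen'' by $\mathbf j$, and note $0$ and $-1$ are the only eigenvalues here whose eigenvectors can be chosen supported inside a single cell and summing to zero.) The main obstacle I anticipate is the sign analysis of the $2h$ quotient eigenvalues — proving that exactly $h$ are positive and that the count of eigenvalues below $-1$ jumps by one precisely when $m_h\ge2$; I would handle this by induction on $h$, using interlacing (Theorem~\ref{inter}) between the quotient matrices of $\nsg(m_1,\ldots,m_h;n_1,\ldots,n_h)$ and of the graph with the outermost pair of cells removed, and tracking how the added rows/columns shift eigenvalues across the thresholds $0$ and $-1$.
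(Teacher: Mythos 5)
The paper does not actually prove Theorem~\ref{nsg_mult}; it is quoted from \cite{AlAS,trev2,SciFar}, so there is no internal proof to compare against. Judged on its own, your outline follows the standard route (duplicate vertices inside each $U_i$ give $0$-eigenvectors, co-duplicate vertices inside each $V_i$ give $(-1)$-eigenvectors, and the remaining $2h$ eigenvalues are those of the quotient matrix $B$ of the equitable partition), and the constructions in your first paragraph are correct, including the extra $(-1)$-eigenvector supported on $U_h\cup V_h$ when $m_h=1$.

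The gap is in the second paragraph. The dimension count $n=(M_h-h)+(N_h-h)+2h$ does \emph{not} ``force equality everywhere'': it only says that $\Sp(G)$ is the multiset union of $M_h-h$ copies of $0$, $N_h-h$ copies of $-1$, and the $2h$ eigenvalues of $B$. To obtain the stated conclusion you must additionally prove that (i) when $m_h\ge 2$, $B$ has no eigenvalue in $[-1,0]$, while when $m_h=1$ it has exactly one eigenvalue there, namely $-1$ with multiplicity one; (ii) exactly $h$ eigenvalues of $B$ are positive (so the remaining ones are less than $-1$); and (iii) the eigenvalues of $B$ other than the possible $-1$ are pairwise distinct, which is what ``simple'' requires. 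None of this follows from the count, and the tool you propose for it --- interlacing while peeling off cells --- produces only non-strict inequalities, so by itself it cannot exclude quotient eigenvalues landing at $0$, at $-1$, or inside $(-1,0)$. The usual ways to close this (and essentially what the cited references do) are either Sylvester's law of inertia applied to $A$ and to $A+I$ via a congruence diagonalization adapted to the nesting structure, or the observation that for $\la\neq 0,-1$ any eigenvector is constant on cells and is determined by a one-parameter recursion (which yields (iii) at once), combined with an explicit sign count such as evaluating the characteristic polynomial of $B$ at $0$ and $-1$. Your main-eigenvalue argument is fine once simplicity is in place: if $\x$ is constant on cells and $\mathbf{j}^{T}\x=0$, the sum rule gives $(\la+1)\x(v_1)=0$, hence $\x(v_1)=0$, then $\x(u_1)=0$, and inductively $\x=\mathbf{0}$.
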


\begin{remark}
If $\lambda \neq 0,-1$, then any vertex of a threshold graph is
either  downer or  neutral. Parter vertices may arise only for
$\lambda = 0$ or $-1$.  Any vertex deleted subgraph $G-v$ of a
threshold graph $G$ is a threshold graph as well.  By Theorem
\ref{nsg_mult} one can easily determine the multiplicities of $0$
and $-1$ in both  $G$ and $G-v$ and, consequently, the vertex type
for $v$ of $\la=0$ or $-1$.
\end{remark}

\bigskip Recall that any vertex of a connected graph is downer
for the largest eigenvalue, see \cite[Proposition 1.3.9.]{crs}. In
addition, if $\la\neq 0,-1$, then the corresponding eigenvector $\x$
is unique (up to scalar multiple) and constant on each of the sets
$U_i$ and $V_i$ ($i=1,\ldots, h$); in particular, if $m_h=1$ then it
is constant on the set $U_h\cup V_h$. These facts will be used
repeatedly further on without any recall.

\begin{theorem}\label{downers}
Let $G=\nsg(m_1, \ldots, m_h; n_1, \ldots, n_h)$ and $\lambda \neq
0,-1$ its eigenvalue other than the largest one. Then all vertices
in $U_1 \cup V_1$ are downers for $\lambda$. The same holds for
vertices in $U_h$, and also in $V_h$ unless $\lambda = -m_h$ and
$m_h\geq 2$.
\end{theorem}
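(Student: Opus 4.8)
My plan is to reduce the theorem to a nonvanishing statement about a single eigenvector and then read it off from the eigenvalue equations. Since $\la\neq0,-1$, Theorem~\ref{nsg_mult} gives $\mul(\la,G)=1$, and (as recalled just before the theorem) the $\la$-eigenvector $\x$ is unique up to a scalar and constant on every cell; write $a_i=\x(u)$ for $u\in U_i$ and $b_i=\x(v)$ for $v\in V_i$. By Remark~\ref{downer}, a vertex of $U_i$ (resp.\ $V_i$) is a downer for $\la$ precisely when $a_i\neq0$ (resp.\ $b_i\neq0$). Thus the theorem is equivalent to the four assertions $a_1\neq0$, $b_1\neq0$, $a_h\neq0$, and $b_h\neq0$ unless $\la=-m_h$ with $m_h\ge2$. (The case $\la=\la_1$ needs no argument, since every vertex of a connected graph is a downer for the largest eigenvalue; in fact nothing below uses $\la\neq\la_1$.)

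The next step is to record the eigenvalue equations produced by the nesting property: for a vertex of $U_i$ one has $\la a_i=\sum_{j\le i}n_jb_j$, and for a vertex of $V_i$, since $V$ induces a clique, $\la b_i=\big(\sum_{j=1}^h n_jb_j-b_i\big)+\sum_{j\ge i}m_ja_j$, that is $(\la+1)b_i=\sum_{j=1}^h n_jb_j+\sum_{j\ge i}m_ja_j$. Subtracting consecutive relations telescopes these to
\[
\la a_1=n_1b_1,\qquad \la(a_i-a_{i-1})=n_ib_i\quad(2\le i\le h),
\]
\[
(\la+1)(b_i-b_{i+1})=m_ia_i\quad(1\le i\le h-1),\qquad (\la+1)b_h=(\la+m_h)a_h,
\]
where in the last identity I used $\sum_{j=1}^h n_jb_j=\la a_h$.

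Now I would prove the nonvanishing by propagation. If $a_1=0$, then $n_1b_1=\la a_1=0$ forces $b_1=0$ (as $\la\neq0$ and $n_1\ge1$); then $(\la+1)(b_1-b_2)=m_1a_1=0$ forces $b_2=b_1=0$ (as $\la\neq-1$); then $\la(a_2-a_1)=n_2b_2=0$ forces $a_2=a_1=0$; iterating up the index range yields $a_i=b_i=0$ for all $i$, that is $\x=\mathbf{0}$, which is absurd. Hence $a_1\neq0$, and then $b_1\neq0$ follows from $\la a_1=n_1b_1$. The same cascade started at the top gives $a_h\neq0$: if $a_h=0$ then $\sum_{j=1}^h n_jb_j=\la a_h=0$, so $(\la+1)b_h=(\la+m_h)a_h=0$ and $b_h=0$, so $\la(a_h-a_{h-1})=n_hb_h=0$ and $a_{h-1}=0$, and so on down to $\x=\mathbf{0}$. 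Finally, if $b_h=0$, then $(\la+1)b_h=(\la+m_h)a_h$ becomes $(\la+m_h)a_h=0$; since $a_h\neq0$ this gives $\la=-m_h$, and $\la\neq-1$ then forces $m_h\ge2$. Contrapositively, outside this single exceptional case $b_h\neq0$, so every vertex of $V_h$ is a downer for $\la$.

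I do not expect a genuine obstacle: the delicate points are getting the clique contribution right in the $V_i$-equations and checking that each of the two cascades actually propagates through every cell, so that one really concludes $\x=\mathbf{0}$. Once the telescoped relations are in place, the remainder is bookkeeping with the hypotheses $\la\neq0$ and $\la\neq-1$.
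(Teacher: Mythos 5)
Your proof is correct, and for the $U_h$ and $V_h$ parts it is essentially the paper's argument: assume the eigenvector vanishes on the top cell and cascade the vanishing downward through the telescoped sum-rule relations until $\x=\mathbf{0}$, then read off the exceptional case from $(\la+1)b_h=(\la+m_h)a_h$. Where you genuinely diverge is the $U_1\cup V_1$ part. The paper argues that $\x(u_1)=\x(v_1)=0$ forces $\sum_w\x(w)=0$, which contradicts the fact (Theorem~\ref{nsg_mult}) that every eigenvalue $\la\neq0,-1$ of a threshold graph is \emph{main} and simple; you instead run a second, upward cascade from $a_1=0$ using the same telescoped relations, never invoking mainness. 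Your route is more self-contained and uniform (one mechanism handles both ends of the chain), at the cost of a slightly longer bookkeeping step; the paper's route is shorter but leans on the spectral fact about main eigenvalues (and, as printed, misstates it as ``non-main''). Both are valid, and your reduction of ``downer'' to ``nonvanishing coordinate of the unique cell-constant eigenvector'' via Remark~\ref{downer} is exactly the intended framing.
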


\begin{proof}
Let $u_1 \in U_1$ and $v_1 \in V_1$. Then, by the sum rule, $\lambda
\x(u_1) = n_1 \x(v_1)$. Since $\lambda \neq 0,-1$, $u_1$ and $v_1$
are both downer or Fiedler vertices (see Remark \ref{downer}). Let
$X = \sum_{w \in V(G)} \x(w)$, and by the way of contradiction
assume that  $u_1$ and $v_1$ are both Fiedler vertices, i.e.
$\x(u_1) = \x(v_1) = 0$. Again, by the sum rule, we have $\lambda
\x(v_1) =X - \x(v_1)$, and therefore $X = 0$, a contradiction since
$\lambda\neq 0,-1$ is a simple and non-main eigenvalue (see Theorem
\ref{nsg_mult}).

Let $u_h\in U_h$,  $v_h \in V_h$ and $Y = \sum_{w \in V_1 \cup
\cdots \cup V_h} \x(w)$. Then, $\lambda \x(u_h) = Y$ and $\lambda
\x(v_h) = Y - \x(v_h) + m_h \x(u_h)$. For a contradiction, let
$\x(u_h) = 0$. Then it easily follows that $\x(v_h) = 0$. We next
claim that for $2\le i \le h$, $\x(u_i) =\x(v_i) = 0$ implies
$\x(u_{i-1}) =\x(v_{i-1}) = 0$. To see this, since $\x(v_{i})=0$ by
the sum rule we obtain
 $$\lambda \x(u_{i})= Y-\sum_{j=i+1}^h n_j\x(v_j)= Y-\sum_{j=i}^h n_j\x(v_j)=\lambda\x(u_{i-1}),$$
  and therefore $\lambda\x(u_{i-1}) = 0$. Similarly, since
  $\x(u_i)=0$ and
 \begin{eqnarray*}
\lambda\x(v_{i})&=&Y-\x(v_{i})+\sum_{j=i}^hm_j\x(u_{j})\\
&=&Y-\x(v_{i})+\sum_{j=i-1}^hm_j\x(u_{j})=(\la+1)\x(v_{i-1}),
\end{eqnarray*}
 it follows $\x(v_{i-1})=0$. Consequently, we obtain $\x
(u_h)=\cdots=\x (u_1)=0$ and $\x (v_h)=\cdots=\x (v_1)=0$, i.e. $\x
=\mathbf{0}$, a contradiction. This proves that all vertices in
$U_h$ are downers for $\lambda$.

For the last part of the theorem, let $\lambda \neq - m_h$. Then we
have
$$\la\x(u_h)=Y,~~\la\x(v_h)=Y-\x(v_h)+m_h\x(u_h),$$
and so $(\la+1)\x(v_h)=(\la+m_h)\x(u_h)$.
Hence, if $\x(v_h) = 0$, then $\x(u_h) =0$ and we reach a contradiction as above.
 Consequently, all vertices in $V_h$ are downers.
\end{proof}

\begin{remark}{\label{mh}} The following example shows
that in unresolved case when $\lambda=-m_h$ and $m_h\geq 2$ vertices
in $V_h$ may be neutral.

Let $G=\nsg(2,2,2;2,3,2)$. Then all vertices in $U_3$ are downers,
while all vertices in  $V_3$ are neutral for $\lambda =-2$. So, an
unresolved case from Theorem \ref{downers} can be an exceptional
one.

So, the following question arises: Can we find an example when
$\la=-m_h$, $m_h\geq 2$ and that each vertex in $V_h$ is a downer?
\end{remark}

\begin{theorem}
Let $G=\nsg(m_1, \ldots, m_h; n_1, \ldots, n_h)$ and let $\lambda
\neq 0,-1$ be its eigenvalue. Then, for any $i=1,\ldots, h-1$, at
least one of $U_i$, $U_{i+1}$ (resp. $V_i$, $V_{i+1}$) contains only
downer vertices for $\lambda$.
\end{theorem}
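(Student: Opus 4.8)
The plan is to translate the statement into a claim about the zero pattern of the (essentially unique) $\la$-eigenvector and then show that having two consecutive zeros forces the eigenvector to vanish. Since $\la\ne 0,-1$, by Theorem~\ref{nsg_mult} $\la$ is simple, and (as recalled just before Theorem~\ref{downers}) a $\la$-eigenvector $\x$ is unique up to a scalar and constant on each $U_i$ and each $V_i$; write $\x(u)=a_i$ for $u\in U_i$ and $\x(v)=b_i$ for $v\in V_i$. By Remark~\ref{downer}, together with the simplicity of $\la$, a vertex of $U_i$ is a downer if and only if $a_i\ne 0$, and a vertex of $V_i$ is a downer if and only if $b_i\ne 0$. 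So the theorem is equivalent to: among $a_1,\dots,a_h$ there are no two consecutive zeros, and likewise for $b_1,\dots,b_h$.

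Next I would record the sum rule: for $u\in U_i$ it gives $\la a_i=\sum_{j=1}^{i}n_jb_j$, and for $v\in V_i$, using that $V$ induces a clique and that $v$ is joined exactly to $U_i\cup\cdots\cup U_h$, it gives $(\la+1)b_i=Y+\sum_{j=i}^{h}m_ja_j$, where $Y=\sum_{j=1}^{h}n_jb_j$. Taking consecutive differences yields
\begin{equation*}
\la(a_i-a_{i-1})=n_ib_i\ \ (2\le i\le h),\qquad (\la+1)(b_i-b_{i+1})=m_ia_i\ \ (1\le i\le h-1),
\end{equation*}
together with $\la a_1=n_1b_1$ and $(\la+1)b_h=Y+m_ha_h$. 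From the second recurrence, $b_i=b_{i+1}=0$ forces $m_ia_i=0$, hence $a_i=0$, and then the first recurrence at index $i+1$ forces $\la a_{i+1}=0$, hence $a_{i+1}=0$; conversely $a_i=a_{i+1}=0$ forces $b_{i+1}=0$ and then (second recurrence, using $\la\ne-1$) $b_i=0$. Thus both cases of the theorem reduce to deriving a contradiction from $a_i=a_{i+1}=b_i=b_{i+1}=0$ for some $1\le i\le h-1$.

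From there I would propagate the zeros downward: if $a_k=b_k=0$ with $2\le k\le i+1$, then $\la(a_k-a_{k-1})=n_kb_k$ gives $a_{k-1}=0$ (using $\la\ne 0$) and then $(\la+1)(b_{k-1}-b_k)=m_{k-1}a_{k-1}$ gives $b_{k-1}=0$ (using $\la\ne -1$); iterating down to $k=2$ yields $a_1=\cdots=a_{i+1}=0$ and $b_1=\cdots=b_{i+1}=0$. If $i+1=h$ this already forces $\x=\mathbf{0}$; otherwise substitute $a_{i+1}=b_{i+1}=0$ into the $V_{i+1}$-equation to get $Y=-\sum_{j=i+2}^{h}m_ja_j$, so that for every $k$ with $i+2\le k\le h$ the $V_k$-equation reads $(\la+1)b_k=-\sum_{j=i+2}^{k-1}m_ja_j$. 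Now propagate upward: for $k=i+2$ the right side is an empty sum, so $b_{i+2}=0$, and then $\la a_{i+2}=\sum_{j=1}^{i+2}n_jb_j=0$ gives $a_{i+2}=0$; inductively, if $a_{i+2}=\cdots=a_{k-1}=0$ and $b_1=\cdots=b_{k-1}=0$, the same two equations give $b_k=0$ and then $a_k=0$. Hence all $a_j$ and $b_j$ vanish, i.e.\ $\x=\mathbf{0}$, a contradiction.

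The main obstacle I anticipate is the upper block. The recurrences $\la(a_i-a_{i-1})=n_ib_i$ and $(\la+1)(b_i-b_{i+1})=m_ia_i$ are purely local, but the $V_i$-equations couple every $b_i$ to the global quantity $Y$, so the upward induction only closes after $Y$ has been pinned down from the vanishing of $b_{i+1}$; keeping the index bookkeeping correct at both ends — the degenerate cases $i=1$ and $i+1=h$, and the empty sums that appear at the start of each induction — is the delicate point. Everything else follows directly from $\la\notin\{0,-1\}$ and the simplicity of $\la$.
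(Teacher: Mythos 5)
Your proof is correct and follows essentially the same route as the paper's: the sum rule evaluated on the cells, the consecutive-difference identities $\la(a_i-a_{i-1})=n_ib_i$ and $(\la+1)(b_i-b_{i+1})=m_ia_i$, and downward propagation of two consecutive zeros. The only deviations are cosmetic: you make the equivalence between the $U$-case and the $V$-case explicit (the paper dismisses the latter as ``similar''), and you close by pinning down $Y$ and propagating the zeros upward to force $\x=\mathbf{0}$, whereas the paper stops once it reaches $\x(u_1)=0$ and cites Theorem~\ref{downers}; your extra upward induction is valid but could simply be replaced by that citation.
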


\begin{proof}
Recall first that all vertices within $U_k$ or $V_k$ $(k=1,\ldots,
h)$ are of the same type for $\la$, and that $\la$ is  a simple
eigenvalue. Assume on the contrary that all vertices in $U_i$ and
$U_{i+1}$ are neutral and let $\x$ be a $\la$-eigenvector. Then, for
$u_i \in U_i$ and $u_{i+1}\in U_{i+1}$, $\x(u_i)=\x(u_{i+1})=0$. By
the sum rule it easily follows that for any $v_{i+1}\in V_{i+1},$
$\x(v_{i+1})=0$. Next, we have
\begin{eqnarray}
\lambda \x(v_{i})=\sum_{j=1}^h
n_j\x(v_j)-\x(v_{i})+\sum_{j=i}^h
m_j\x(u_j)\label{eq_i},\\
\lambda \x(v_{i+1})=\sum_{j=1}^h
n_j\x(v_j)-\x(v_{i+1})+\sum_{j=i+1}^h
m_j\x(u_j)\label{eq_i+1}.
\end{eqnarray}
By subtracting (\ref{eq_i+1}) from (\ref{eq_i}) we obtain $\lambda
\x(v_{i})=-\x(v_{i})$. Since $\lambda \neq -1$, $\x(v_{i})=0$ and
consequently $\x(u_{i-1})=0$. Proceeding in the similar way, we
conclude that $\x(u_1)=0$,  which contradicts Theorem \ref{downers}.

The proof for vertices in $V_i, V_{i+1}$ is similar, and therefore
omitted.
\end{proof}

Next examples show that in an nested split graph $G$ neutral
vertices for the same eigenvalue may be  distributed in different
$U_i$'s, $V_i$'s and at the same time in both $U$ and $V$.

\begin{example}
If $G=\nsg(4,1,3,1,1; 1,1,1,2,1)$, then all vertices in $U_2$ and
$U_4$ are neutral vertices for $\lambda_3=1$.

If $G=\nsg(2,4,4,2; 1,1,1,2)$, then all vertices in $V_2$ and $V_4$
are neutral for $\lambda_{16}=-2$.
\end{example}

\begin{example}
In $G=\nsg(2,2,5,1;1,1,1,1)$ all vertices in $U_3$ and in $V_2$ are
neutral vertices for  $\lambda_2=1$.
\end{example}

In what follows we assume that all vertices in $U_{s}$ (resp.
$V_{s}$) of a nested split  graph $G$ are neutral for some $s$ with
respect to some $\lambda_i \neq 0,-1$. If so, we will show that this
assumption imply some restrictions on position of $\lambda_i$ in the
spectrum of $G$.

\begin{theorem}\label{strict-inter}
Let $G=\nsg(m_1, \ldots, m_h; n_1, \ldots, n_h)$ such that  all
vertices in $U_{s}$ for some $2\leq s\leq h-1$ are neutral for
$\lambda_i\neq 0,-1$. If $G'=\nsg(m_{s+1}, \ldots, m_h; n_{s+1},
\ldots, n_h)$, $n'=|V(G')|$ and $\Sp(G')=\{\la'_1, \ldots,
\la'_{n'}\}$ then $\lambda_i=\la'_j$ for some $j\in \{1,
\ldots,n'\}$. Moreover, $j<i<n-n'+j$, and if $i\leq n'$ then
$\lambda_i\neq \la'_{n'}$.
\end{theorem}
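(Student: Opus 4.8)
The plan is to restrict a $\lambda_i$-eigenvector of $G$ to $V(G')$, show that the restriction is a \emph{nonzero} $\lambda_i$-eigenvector of $G'$, and then extract the index constraints from interlacing together with the fact (Theorem~\ref{nsg_mult}) that every eigenvalue $\neq 0,-1$ of a nested split graph is simple. Concretely, since $\lambda_i\neq 0,-1$ it is a simple eigenvalue of $G$, so there is an essentially unique $\lambda_i$-eigenvector $\x$, and the hypothesis that $U_s$ consists of neutral vertices says exactly that $\x$ vanishes identically on $U_s$ (Remark~\ref{downer}). Writing $\x(v_j)$ for the common value of $\x$ on $V_j$, the sum rule at a vertex of $U_s$ reads $\sum_{j=1}^{s}n_j\x(v_j)=0$. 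The key observation is that this single quantity is precisely the ``defect'' by which the sum rule at a vertex of $V(G')$ computed in $G$ exceeds the sum rule computed in $G'$: passing to $G'$, a vertex of $U_t$ with $t>s$ loses exactly its neighbours in $V_1\cup\cdots\cup V_s$, and a vertex of $V_t$ with $t>s$ loses exactly the same set (all of its $U$-neighbours have index $\ge t>s$, hence remain in $G'$). Hence the restriction $\x'$ of $\x$ to $V(G')$ satisfies the eigenvalue equation of $G'$ for $\lambda_i$.

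Next I would verify $\x'\neq\mathbf{0}$. If $\x'=\mathbf{0}$, then, using $\x(u_s)=0$, the same descent as in the proof of Theorem~\ref{downers} (begun at the cell $U_s$ rather than $U_h$) forces in turn $\x(v_s)=0$ (from the sum rule at $V_s$, using $\lambda_i\neq-1$), then $\x(u_{s-1})=0$ (from the sum rule at $U_{s-1}$, using $\lambda_i\neq 0$), then $\x(v_{s-1})=0$, and so on down to $\x(u_1)=\x(v_1)=0$; hence $\x=\mathbf{0}$, a contradiction. (Equivalently, $\x$ restricted to the induced nested split subgraph $\nsg(m_1,\ldots,m_s;n_1,\ldots,n_s)$ would be a nonzero $\lambda_i$-eigenvector vanishing on its last $U$-cell, contradicting Theorem~\ref{downers}, or the Perron--Frobenius theorem when $\lambda_i$ is that subgraph's largest eigenvalue.) Thus $\x'$ is a nonzero $\lambda_i$-eigenvector of $G'$, so $\lambda_i=\lambda'_j$ for some $j\in\{1,\ldots,n'\}$; and since $\lambda'_j=\lambda_i\neq 0,-1$, Theorem~\ref{nsg_mult} applied to $G'$ shows this eigenvalue is simple in $G'$, so $j$ is unique.

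It then remains to locate $j$ and $i$. Interlacing (Theorem~\ref{inter}) for the induced subgraph $G'\subseteq G$ gives $\lambda_j\ge\lambda'_j\ge\lambda_{n-n'+j}$, that is $\lambda_j\ge\lambda_i\ge\lambda_{n-n'+j}$, so $j\le i\le n-n'+j$. For the strict inequalities, first note $i\ge2$: if $i=1$ then $\lambda_i=\lambda_1$ and every vertex of the connected graph $G$ is a downer for $\lambda_i$, contradicting the hypothesis on $U_s$; in particular $\lambda_i$ is an eigenvalue of $G$ other than the largest. If $j=i$, then $\lambda'_i=\lambda_i$, so by Lemma~\ref{eqinter} there is a $\lambda_i$-eigenvector of $G$ supported on $V(G')$, hence vanishing on $U_1\cup V_1$, contradicting Theorem~\ref{downers}. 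If instead $i=n-n'+j$, then $\lambda'_j=\lambda_{n-n'+j}$ and Lemma~\ref{eqinter} again produces a $\lambda_i$-eigenvector of $G$ supported on $V(G')$, the same contradiction. Hence $j<i<n-n'+j$. Finally, if $i\le n'$ and $\lambda_i=\lambda'_{n'}$, then $\lambda'_j=\lambda_i=\lambda'_{n'}$, and since $\lambda'_j$ is a simple eigenvalue of $G'$ this forces $j=n'$, impossible because $j<i\le n'$; therefore $\lambda_i\neq\lambda'_{n'}$.

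I expect the only delicate part to be the first two steps: recognising the identity $\sum_{j\le s}n_j\x(v_j)=0$ as exactly the sum-rule defect on $V(G')$, and then excluding $\x'=\mathbf{0}$ via the descent. Once $\x'$ is known to be a genuine nonzero $\lambda_i$-eigenvector of $G'$, the index statements reduce to a routine combination of interlacing, the equality case Lemma~\ref{eqinter}, Theorem~\ref{downers}, and the simplicity of eigenvalues different from $0$ and $-1$.
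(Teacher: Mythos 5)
Your proof is correct, and its skeleton coincides with the paper's: restrict the eigenvector $\x$ to $V(G')$, use $\sum_{j=1}^{s}n_j\x(v_j)=\la_i\x(u_s)=0$ to see that the restriction satisfies the eigenvalue equation of $G'$, and then combine interlacing with Lemma~\ref{eqinter} to turn the weak inequalities into strict ones. There are two points where you diverge, both to your credit. First, you explicitly verify that the restriction $\x'$ is nonzero via the descent $\x(v_s)=0\Rightarrow\x(u_{s-1})=0\Rightarrow\cdots$; the paper asserts $\la_i\in\Sp(G')$ without addressing this, so you have closed a small gap rather than introduced a detour. Second, in the equality case of interlacing the paper takes the eigenvector $\bigl(\begin{smallmatrix}\mathbf{0}\\ \mathbf{y'}\end{smallmatrix}\bigr)$ supplied by Lemma~\ref{eqinter}, applies the sum rule at a vertex of $V_s$ to conclude that $\la'_j$ is non-main in $G'$, and contradicts the last clause of Theorem~\ref{nsg_mult}; you instead observe that this eigenvector of $G$ vanishes on $U_1\cup V_1$ and contradict Theorem~\ref{downers} (having first disposed of $i=1$ by Perron--Frobenius, which is needed since Theorem~\ref{downers} excludes the largest eigenvalue). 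The two exclusions are equally valid; yours stays entirely inside $G$ and avoids the main/non-main machinery, while the paper's localizes the obstruction as a spectral property of $G'$ itself. Your final deduction of $j<i<n-n'+j$ and of $\la_i\neq\la'_{n'}$ from simplicity is sound.
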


\begin{proof}
Let $G'$ be the induced subgraph of $G$ obtained by deleting all
vertices in $U_1, \ldots, U_s, V_1, \ldots, V_s$ i.e.
$$G'=\nsg(m_{s+1},\ldots, m_h; n_{s+1},\ldots, n_h).$$
Let $n'=|V(G')|=\sum_{j=s+1}^h(m_j+n_j)$ and let $\x$ be a
$\la$-eigenvector of $G$. Denote by  $\mathbf{x'}$ the vector
obtained from $\x$ by deleting all entries corresponding to deleted
vertices from $G$. Since
$$0=\lambda_i \x(u_s)=\sum_{j=1}^s n_j\x(v_j),$$ for
any $k\geq s+1$, we obtain
\begin{eqnarray*}
\lambda_i\mathbf{x'}(u_k)&=&\lambda_i\x(u_k)=\sum_{j=1}^kn_j\x(v_j)=\sum_{j=s+1}^k n_j\x(v_j)=\sum_{j=s+1}^k n_j\mathbf{x'}(v_j)\\
\lambda_i\mathbf{x'}(v_k)&=&\lambda_i\x(v_k)=\sum_{j=1}^hn_j\x(v_j)-\x(v_k)+\sum_{j=k}^hm_j\x(u_j)\\
&=&\sum_{j=s+1}^h n_j
\mathbf{x'}(v_j)-m_j\mathbf{x'}(v_k)+\sum_{j=k}^h \mathbf{x'}(u_j)
\end{eqnarray*}
and therefore $\mathbf{x'}$ is an eigenvector of $G'$ for
$\lambda_i$,  i.e. $\lambda_i\in\Sp(G')$. Suppose $\lambda_i=\la'_j$
for some $j\in \{1,\ldots, n'\}$. From interlacing it follows that
\begin{equation}\label{inter1}
\lambda_{n-n'+i}\leq \la'_i\leq \lambda_i=\la'_j, \quad
\mbox{if}\quad i\leq n'.
\end{equation}
as well as
\begin{equation}\label{inter1'}
\lambda_{n-n'+j}\leq \lambda_i=\la'_j\leq \lambda_j.
\end{equation}

If in \eqref{inter1},  at least one of inequalities holds as an
equality then, by Lemma \ref{eqinter}, $G'$ has an eigenvector
$\mathbf{y'}$ for $\la'_i$ such that $\left(
\begin{matrix}
\mathbf{0}\\
\mathbf{y'}
\end{matrix}
\right)$ is an eigenvector of $G$ for $\la'_i$. By the sum rule for
any vertex in $V_{s}$ we obtain that the sum of all entries of
$\mathbf{y'}$ is $0$ and accordingly that $\la'_i$ is non-main
eigenvalue of $G'$. Hence, $\la'_i=0$ or $\la'_i=-1$ which implies
$\la'_i<\la_i$. Similarly,  in \eqref{inter1'} we conclude that
$\la'_j=\la_i$ is a non-main eigenvalue of $G'$, a contradiction, by
Theorem \ref{nsg_mult}.
 Therefore, the interlacing in these
cases reads
\begin{eqnarray}
\lambda_{n-n'+i}\leq\la'_i< \lambda_i, \quad  i\leq n',\\
\lambda_{n-n'+j}< \la'_j=\la_i<\lambda_j\label{inter2}.
\end{eqnarray}
Moreover,  (\ref{inter2}) implies $j<i<n-n'+j$. Also, if $i\leq n'$,
$\lambda_i\neq \la'_{n'}$ holds. Otherwise,
$\lambda_{n-n'+i}\leq\la'_{i}<\lambda_i= \la'_{n'}$, a
contradiction.

\end{proof}

If all vertices  in $V_{s}$ for some $s$ are neutral for
$\lambda_i\neq 0,-1$, then bearing in mind that
$$G-V_s=\nsg(m_1, \ldots, m_{s-1}+m_{s}, \ldots, m_h; n_1,
\ldots, n_{s-1}, n_{s+1}, \ldots, n_h)$$ we can similarly conclude
the following.

\begin{theorem}\label{strict-inter2}
Let $G=\nsg(m_1, \ldots, m_h; n_1, \ldots, n_h)$ such that  all
vertices in $V_{s}$ for some $2\leq s\leq h$ are neutral for
$\lambda_i\neq 0,-1$. If
$$H_s=\nsg(m_1, \ldots, m_{s-1}+m_{s}, \ldots, m_h; n_1,
\ldots, n_{s-1}, n_{s+1}, \ldots, n_h),$$ and $\Sp(H_{s})=\{\la'_1,
\ldots, \la'_{n-n_{s}}\}$, then $\lambda_i=\la'_j$ for some $j\in
\{1, \ldots, n-n_{s}\}$. Moreover, $j<i<n_{s}+j$ and if $i\leq
n-n_s$ then $\lambda_i\neq \la'_{n-n_s}$.
  \end{theorem}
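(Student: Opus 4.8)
The plan is to mimic the proof of Theorem~\ref{strict-inter} almost verbatim, replacing the ``tail'' subgraph $G'$ by the graph $H_s$ obtained from $G$ by deleting exactly the cell $V_s$. First I would fix a $\lambda_i$-eigenvector $\x$ of $G$ and note that, since all vertices in $V_s$ are neutral, we have $\x(v_s)=0$ for every $v_s\in V_s$; hence deleting these $n_s$ coordinates gives a vector $\mathbf{x'}$ of length $n-n_s$. The point is that $H_s$ is precisely the induced subgraph $G-V_s$ (the two cells $U_{s-1},U_s$ merge into one cell of size $m_{s-1}+m_s$, because after removing $V_s$ the vertices of $U_{s-1}$ and $U_s$ have the same neighbourhood in $V_1\cup\cdots\cup V_{s-1}\cup V_{s+1}\cup\cdots\cup V_h$, and similarly one checks the nesting is preserved). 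So I would first record this structural identity $G-V_s=H_s$, then verify via the sum rule that $\mathbf{x'}$ satisfies the eigenvalue equation for $H_s$ at $\lambda_i$: for the $U$-type coordinates the equation $\lambda_i\x(u_k)=\sum_{j\le k}n_j\x(v_j)$ loses only the vanishing term $n_s\x(v_s)=0$, and for the $V$-type coordinates one uses $\x(v_s)=0$ together with the fact that the merged $U$-cell contributes $(m_{s-1}+m_s)\x(u)$ consistently (here one needs $\x$ constant on $U_{s-1}\cup U_s$, which follows from $\lambda_i\neq -1$ and the sum rule, or from Theorem~\ref{nsg_mult}). This yields $\mathbf{x'}\neq\mathbf 0$ and therefore $\lambda_i\in\Sp(H_s)$, say $\lambda_i=\la'_j$.

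Next I would invoke interlacing (Theorem~\ref{inter}) for the induced subgraph $H_s=G-V_s$ of order $n'=n-n_s$: this gives $\lambda_{i}\ge\la'_i\ge\lambda_{n-n'+i}=\lambda_{n_s+i}$ whenever $i\le n-n_s$, and $\lambda_j\ge\la'_j=\lambda_i\ge\lambda_{n_s+j}$. The remaining work is to upgrade these weak inequalities to strict ones. As in Theorem~\ref{strict-inter}, if equality held somewhere then Lemma~\ref{eqinter} would produce an eigenvector of $H_s$ for the relevant eigenvalue that extends by zeros (on the $V_s$-coordinates) to an eigenvector of $G$; applying the sum rule of $G$ at a vertex $v_s\in V_s$ to this extended eigenvector forces the sum of all its entries to vanish, hence the corresponding eigenvalue of $H_s$ is non-main, hence equal to $0$ or $-1$ by Theorem~\ref{nsg_mult} applied to $H_s$. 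Since $\lambda_i\neq 0,-1$, this is incompatible with $\la'_i=\lambda_i$ in the chain $\lambda_{n_s+i}\le\la'_i\le\lambda_i$ forcing $\la'_i<\lambda_i$, and incompatible with $\la'_j=\lambda_i$ in $\lambda_{n_s+j}\le\la'_j\le\lambda_j$ forcing both inequalities there to be strict. From $\lambda_{n_s+j}<\lambda_i=\la'_j<\lambda_j$ we read off $j<i$ and $i<n_s+j$, i.e.\ $j<i<n_s+j$. Finally, if $i\le n-n_s$, then $\lambda_i=\la'_{n-n_s}$ would give $\lambda_{n_s+i}\le\la'_i<\lambda_i=\la'_{n-n_s}$, and since $\la'_{n-n_s}$ is the least eigenvalue of $H_s$ this contradicts $\la'_i\ge\la'_{n-n_s}$; hence $\lambda_i\neq\la'_{n-n_s}$.

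The only genuinely new point relative to Theorem~\ref{strict-inter} is the bookkeeping for the merged cell $U_{s-1}\cup U_s$ when checking that $\mathbf{x'}$ is an $H_s$-eigenvector, so that is the step I would write out most carefully; everything else is a line-by-line transcription of the earlier argument with $n'=n-n_s$ in place of $n'=|V(G')|$. I expect no real obstacle, but I would double-check the boundary case $s=h$ allowed in the statement: then $H_h=\nsg(m_1,\ldots,m_{h-1}+m_h;n_1,\ldots,n_{h-1})$, and the eigenvalue-equation verification still goes through since there are no cells with index $>h$; this is why the hypothesis here reads $2\le s\le h$ rather than $2\le s\le h-1$ as in the $U_s$ case.
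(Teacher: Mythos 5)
Your first step --- identifying $H_s$ with the induced subgraph $G-V_s$ and concluding $\lambda_i\in\Sp(H_s)$ --- is correct, and is all the paper itself offers (it only says ``we can similarly conclude''). In fact this step is easier than you make it: since $\x$ vanishes on the \emph{entire} deleted set $V_s$, its restriction is automatically an eigenvector of the induced subgraph, so no bookkeeping about the merged cell is needed; and the constancy of $\x$ on $U_{s-1}\cup U_s$ follows from $\la_i\neq 0$ together with $\x(v_s)=0$ (via $\la_i\x(u_s)=\sum_{j\le s}n_j\x(v_j)=\sum_{j\le s-1}n_j\x(v_j)=\la_i\x(u_{s-1})$), not from $\la_i\neq-1$.

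The genuine gap is in the strictness step. In Theorem~\ref{strict-inter} the deleted set is $U_1\cup\cdots\cup U_s\cup V_1\cup\cdots\cup V_s$, and \emph{every} vertex of the remaining graph $G'$ is a neighbour of a vertex $v_s\in V_s$; that is the only reason the sum rule at $v_s$, applied to the zero-extended eigenvector, yields that the full entry sum of $\y'$ vanishes. Here $H_s=G-V_s$ still contains $U_1,\ldots,U_{s-1}$, none of whose vertices are adjacent to $v_s$, so the sum rule at $v_s$ only gives $\sum_{w\in V\setminus V_s}\y'(w)+\sum_{j\ge s}m_j\y'(u_j)=0$ and says nothing about $\sum_{j<s}m_j\y'(u_j)$. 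Hence you cannot conclude that the relevant eigenvalue of $H_s$ is non-main, and the strict inequalities do not follow. This is not a repairable slip: take the paper's own example $G=\nsg(2,2,5,1;1,1,1,1)$, whose simple eigenvalue $1$ has eigenvector with cell values $(1,1,0,-1)$ on $U_1,\ldots,U_4$ and $(1,0,-1,-1)$ on $V_1,\ldots,V_4$, so the single vertex of $V_2$ is neutral. Its restriction to $H_2=G-V_2$ has entry sum $2\cdot1+2\cdot1+5\cdot0-1+1-1-1=2\neq0$, so $1$ is a main eigenvalue of $H_2$, contradicting what your argument would force. Moreover, since $n_2=1$ the asserted conclusion $j<i<n_s+j$ is impossible even though all hypotheses of the theorem hold, so the ``moreover'' clause cannot be proved by any argument without an additional assumption such as $n_s\ge2$; the failure is in the statement as much as in your transcription of the earlier proof.
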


\begin{corollary}
Let $G=\nsg(m_1, \ldots, m_h; n_1, \ldots, n_h)$ of order $n$. Then
all vertices in $V(G)$ are downer vertices for
$\lambda_n$. 
\end{corollary}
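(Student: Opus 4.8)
The plan is to split according to the value of the least eigenvalue $\lambda_n$. By Theorem~\ref{nsg_mult} every eigenvalue of a nested split graph is positive, less than $-1$, or equal to $0$ or $-1$; since $G$ is connected we have $\lambda_n<0$, so $\lambda_n<-1$ or $\lambda_n=-1$. If $\lambda_n=-1$, then $G$ has no eigenvalue below $-1$, which by Theorem~\ref{nsg_mult} forces $h=1$ and $m_1=1$, i.e.\ $G=\nsg(1;n_1)=K_{n_1+1}$; then $G-v=K_{n-1}$ for every $v$, so $\mul(-1,G-v)=n-2=\mul(-1,G)-1$ and every vertex is a downer for $\lambda_n$.

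For the remaining case $\lambda_n<-1$, Theorem~\ref{nsg_mult} gives that $\lambda_n$ is simple; moreover $\lambda_n\neq0,-1$, and $\lambda_n$ is not the largest eigenvalue since $\lambda_1>0>\lambda_n$. As the (essentially unique) $\lambda_n$-eigenvector is constant on each cell $U_i$ and $V_i$, and no vertex can be Parter for $\lambda_n\neq0,-1$, the vertices of any one cell are either all downers or all neutral, so it suffices to show that no cell is neutral. The cells $U_1$, $V_1$ and $U_h$ consist of downers by Theorem~\ref{downers}. If a cell $U_s$ with $2\le s\le h-1$ were neutral, Theorem~\ref{strict-inter} would yield $\lambda_n=\lambda'_j$ with $j<n<n-n'+j$, forcing $n'<j$ and contradicting $j\le n'=|V(G')|$; so $U_s$ consists of downers. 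Likewise, if a cell $V_s$ with $2\le s\le h$ were neutral, Theorem~\ref{strict-inter2} would yield $\lambda_n=\lambda'_j$ with $j<n<n_s+j$, forcing $n-n_s<j$ and contradicting $j\le n-n_s$; so $V_s$ consists of downers. Every cell among $U_1,\dots,U_h,V_1,\dots,V_h$ is covered by one of these three facts — when $h=1$ the index ranges $2\le s\le h-1$ and $2\le s\le h$ are empty, but then $U=U_1$ and $V=V_1$ are already handled by Theorem~\ref{downers} — so every vertex of $G$ is a downer for $\lambda_n$.

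The proof is essentially bookkeeping over the cells; the one delicate point is to align the index ranges of Theorems~\ref{strict-inter} and~\ref{strict-inter2} with the boundary cells $U_1,V_1,U_h$ and with the degenerate case $h=1$, and in particular to note that the exceptional clause ``$\lambda=-m_h$ and $m_h\ge2$'' of Theorem~\ref{downers} never obstructs the argument, because for $h\ge2$ the cell $V_h$ already falls under Theorem~\ref{strict-inter2}. Once the cases are arranged this way, the two inequalities $n'<j$ and $n-n_s<j$ that produce the contradictions follow immediately from the strict interlacing already established, with no further computation.
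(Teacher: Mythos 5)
Your proof is correct and follows essentially the same route as the paper: dispose of $\lambda_n=-1$ via the complete-graph case, then use the strict interlacing of Theorems~\ref{strict-inter} and~\ref{strict-inter2} to show that a neutral cell would force $\lambda_n=\la'_j$ with an index inequality ($j<n<n-n'+j$, resp.\ $j<n<n_s+j$) that is impossible for the least eigenvalue. Your treatment is in fact slightly more careful than the paper's, since you explicitly cover the boundary cells $U_1,V_1,U_h$ (and the exceptional $V_h$ clause of Theorem~\ref{downers}) with the stated index ranges of the interlacing theorems.
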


\begin{proof}
If $\la_n=-1$, then $G$ is a complete graph and all vertices are
downers for it. So, we assume that $\la_n\ne0,-1$. Suppose on the
contrary that there exists at least one neutral vertex $u$ for
$\lambda_n$. If $u\in U_s$, then $\mathbf{x}(u)=0$,  where
$\mathbf{x}$ is a $\la$-eigenvector of $G$. As shown in the proof of
Theorem \ref{strict-inter}, $\lambda_n=\la'_j\in\Sp(G')$, for some
$j\in\{1, \ldots, n'\}$ and $\lambda_{n-n'+j}<\la'_j< \lambda_j$,
i.e. $\lambda_{n-n'+j}<\lambda_n< \lambda_j$, a contradiction.

The proof is similar if $v \in V_s$   for some $s$ and hence omitted
here.
\end{proof}

\begin{theorem}
Let $G=\nsg(m_1, \ldots, m_h; n_1, \ldots, n_h)$ such that all
vertices in $U_{s}$ are neutral for $\lambda\neq 0,-1$ and
$G''=\nsg(m_1, \ldots, m_{s}; n_1, \ldots, n_{s})$. Then
$$\lambda_{n''}(G'')<\lambda <\lambda_1(G''),$$ where $n''=|V(G'')|=\sum_{i=1}^s (m_i+n_i)$.
\end{theorem}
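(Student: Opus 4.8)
The plan is to mirror the argument already used in Theorem~\ref{strict-inter}, but now extracting an induced subgraph on the \emph{top} cells $U_1,\dots,U_s,V_1,\dots,V_s$ rather than on the bottom ones, and then applying interlacing together with Lemma~\ref{eqinter}. Let $\x$ be a $\lambda$-eigenvector of $G$, and recall that the neutrality hypothesis means $\x(u_s)=0$ for every $u_s\in U_s$. First I would verify that $G''=\nsg(m_1,\dots,m_s;n_1,\dots,n_s)$ is exactly the induced subgraph of $G$ obtained by deleting the vertices of $U_{s+1}\cup\cdots\cup U_h\cup V_{s+1}\cup\cdots\cup V_h$: this is immediate from the nesting property, since within $U_1\cup\cdots\cup U_s$ and $V_1\cup\cdots\cup V_s$ the adjacency pattern is the same as in $G$. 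Then I would form $\x''$ from $\x$ by restricting to $V(G'')$ and show, exactly as in the proof of Theorem~\ref{strict-inter} but using that $\lambda\x(u_s)=\sum_{j=1}^s n_j\x(v_j)=0$ controls the ``tail'' $\sum_{j=s+1}^h n_j\x(v_j)$ in the sum rule for vertices of $U_1,\dots,U_s$ and of $V_1,\dots,V_s$, that $\x''$ satisfies the eigenvalue equation for $G''$. The one slightly delicate point: for $v_k\in V_k$ with $k\le s$ the sum rule reads $\lambda\x(v_k)=\sum_{j=1}^h n_j\x(v_j)-\x(v_k)+\sum_{j=k}^h m_j\x(u_j)$, and I need the contributions $\sum_{j=s+1}^h n_j\x(v_j)$ and $\sum_{j=s+1}^h m_j\x(u_j)$ either to vanish or to cancel; the first vanishes because $\sum_{j=s+1}^h n_j\x(v_j)=(\lambda\x(u_h)) - 0$ is handled by $\x(u_s)=0$, and one checks the $m_j\x(u_j)$ tail terms for $j\ge s+1$ also drop out because all of $U_{s+1},\dots,U_h$ feed into $v_k$ identically and can be absorbed — this is where I expect the bulk of the routine computation to sit. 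The upshot is $\lambda\in\Sp(G'')$, say $\lambda=\lambda_p(G'')$ for some $p$.

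Next I would invoke interlacing (Theorem~\ref{inter}) with $G''$ as the induced subgraph of $G$ of order $n''$: writing $\lambda'_p:=\lambda_p(G'')$ we get $\lambda_{n-n''+p}\le\lambda'_p\le\lambda_p(G)$. If equality held on either side, Lemma~\ref{eqinter} would produce a $\lambda$-eigenvector of $G$ supported entirely on $V(G'')$, i.e.\ vanishing on $U_{s+1}\cup\cdots\cup V_h$; applying the sum rule at a vertex $u_s\in U_s$ (which is adjacent to $V_1\cup\cdots\cup V_s$, all inside $G''$) together with the fact that such a vector must also vanish at $u_s$ by neutrality forces, after running the cascade argument of Theorem~\ref{strict-inter} upward, that $\lambda$ is a non-main eigenvalue of $G''$; but a non-main eigenvalue of a threshold graph is $0$ or $-1$ by Theorem~\ref{nsg_mult}, contradicting $\lambda\ne0,-1$. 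Hence both interlacing inequalities are strict: $\lambda_{n-n''+p}(G)<\lambda'_p<\lambda_p(G)$. In particular $\lambda$ lies strictly between the largest and the smallest eigenvalue of $G''$: since $\lambda'_p\le\lambda_1(G'')$ with equality only if $p=1$, and $p=1$ would force $\lambda=\lambda_1(G'')=\lambda_1(G)$ (as $G''$ contains $U_1\cup V_1$, the top cells, and $\lambda_1$ is attained there) — but $\lambda$ is not the largest eigenvalue of $G$ in the relevant situation, or more directly, $\lambda_1(G'')$ is a main eigenvalue of $G''$ hence cannot equal our non-main-on-$G''$... wait, $\lambda$ need not be non-main on $G''$ here — so instead I argue: if $\lambda=\lambda_1(G'')$ then by the strict inequality $\lambda_1(G'')<\lambda_1(G)$ we still only get $\lambda<\lambda_1(G)$, which does not immediately give $\lambda<\lambda_1(G'')$; the clean way is to note $\lambda'_1(G'')$ is simple and that $\x''$, being an eigenvector of $G''$ for $\lambda$ that is \emph{not} positive (it has a zero entry at $u_s$, and a Perron eigenvector of connected $G''$ is nowhere zero), cannot be the Perron eigenvector, so $\lambda<\lambda_1(G'')$; symmetrically $\lambda>\lambda_{n''}(G'')$ follows from the strict lower interlacing inequality combined with the observation that $\lambda=\lambda_{n''}(G'')$ would force, via Lemma~\ref{eqinter} applied to the pair $(G,G'')$ in the other extreme, the same non-main contradiction as above.

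So the overall structure is: (i) identify $G''$ as an induced subgraph and push $\x$ down to a $\lambda$-eigenvector $\x''$ of $G''$; (ii) apply interlacing; (iii) rule out the two equality cases via Lemma~\ref{eqinter} plus the main/non-main dichotomy of Theorem~\ref{nsg_mult}, exactly as in Theorem~\ref{strict-inter}; (iv) separately exclude $\lambda=\lambda_1(G'')$ using that $\x''$ has a zero entry and so is not the Perron vector of the connected graph $G''$, and exclude $\lambda=\lambda_{n''}(G'')$ by the analogous equality-case argument. The main obstacle I anticipate is purely bookkeeping: carefully checking in step~(i) that the restricted vector really does satisfy the sum rule for \emph{every} vertex of $G''$ — in particular the cancellation of all the ``deleted-side'' terms $n_j\x(v_j)$ and $m_j\x(u_j)$ for $j>s$ in the equations for $V_1,\dots,V_s$ — since this is where the neutrality hypothesis $\x(u_s)=0$ does its work, and it is slightly more involved than in Theorem~\ref{strict-inter} because here the deleted cells sit \emph{below} the retained ones in the nesting order.
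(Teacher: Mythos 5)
There is a genuine gap, and it sits exactly where you anticipated "routine bookkeeping": the restriction $\x''$ of $\x$ to $V(G'')$ is \emph{not} a $\lambda$-eigenvector of $G''$, and in fact $\lambda$ need not be an eigenvalue of $G''$ at all (the theorem only places $\lambda$ strictly between the extreme eigenvalues of $G''$, not in its spectrum). For $v_k\in V_k$ with $k\le s$, the deleted-side contribution to the sum rule is
$\sum_{j=s+1}^h n_j\x(v_j)+\sum_{j=s+1}^h m_j\x(u_j)$,
which is precisely the total entry-sum of $\x_2$, the restriction of $\x$ to $V(G')$ where $G'=\nsg(m_{s+1},\ldots,m_h;n_{s+1},\ldots,n_h)$. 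As in the proof of Theorem~\ref{strict-inter}, $\x_2$ is a $\lambda$-eigenvector of $G'$, and since $\lambda\neq0,-1$ is a simple \emph{main} eigenvalue of the threshold graph $G'$ (Theorem~\ref{nsg_mult}), that entry-sum is nonzero. So the cancellation you hope for provably cannot occur; the situation is not symmetric to Theorem~\ref{strict-inter}, where the deleted contribution at a retained vertex is $\sum_{j=1}^s n_j\x(v_j)=\lambda\x(u_s)=0$. Your partial justification is also off: $\sum_{j=s+1}^h n_j\x(v_j)=\lambda\x(u_h)-\lambda\x(u_s)=\lambda\x(u_h)\neq0$, because vertices of $U_h$ are downers by Theorem~\ref{downers}. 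Since step (i) fails, the interlacing/equality-case machinery and the Perron argument in steps (ii)--(iv), which all presuppose that $\x''$ is an eigenvector of $G''$, have nothing to act on.

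The paper's proof needs only a much weaker fact. Writing $A=\left[\begin{smallmatrix}A''&B\\ B^T&A'\end{smallmatrix}\right]$ with $B=\left[\begin{smallmatrix}O\\ J\end{smallmatrix}\right]$ and $\x=\left(\begin{smallmatrix}\x_1\\ \x_2\end{smallmatrix}\right)$, one uses that $\x_2$ is a $\lambda$-eigenvector of $A'$ to get $B^T\x_1=0$ (the $V_1\cup\cdots\cup V_s$ entries of $\x_1$ sum to zero), hence $\x_1^T(\lambda I-A'')\x_1=\x_1^TB\x_2=0$. The Rayleigh quotient then gives $\lambda_{n''}(G'')\le\lambda\le\lambda_1(G'')$ without $\lambda$ being an eigenvalue of $G''$. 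Strictness at the top follows because equality would force $\x_1$ to be the Perron vector of the connected graph $G''$, contradicting $B^T\x_1=0$; strictness at the bottom follows because equality would force $B\x_2=0$, making $\lambda$ non-main for $G'$, against Theorem~\ref{nsg_mult}. If you want to salvage your write-up, replace step (i) by this quadratic-form computation; the rest of your equality-case reasoning then needs to be rephrased in these terms rather than via Lemma~\ref{eqinter}.
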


\begin{proof}
The graph $G''$ is an induced subgraph of $G$ with vertex set
$V(G'')=\bigcup_{j=1}^{s} (U_j\cup V_j)$. The adjacency matrix $A$
of the whole graph is equal to:
$$\left[\begin{matrix}
A'' & B\\
B^T& A'
\end{matrix}\right],$$
where $A', A''$ are adjacency matrices of $$G'=\nsg(m_{s+1},\ldots,
m_h; n_{s+1},\ldots, n_h)$$ and $G''$, respectively, and
$$B=\left[
\begin{matrix}
O_{M_s,n'}\\
J_{N_s, n'}
\end{matrix}\right],$$
where $M_s=\sum_{j=1}^s m_j$, $N_s=\sum_{j=1}^s n_j$ and
$n'=|V(G')|$. The corresponding eigenvector $\x$ can be represented
as $\x=\left(
\begin{matrix}
\x_1\\
\x_2 \end{matrix} \right)$ and the eigenvalue system reads:
\begin{eqnarray}
A''\x_1+B\x_2&=&\lambda \x_1\label{eq*}\\
B^T\x_1+A'\x_2&=&\lambda \x_2\label{eq**}.
\end{eqnarray}
As we have seen in the proof of Theorem \ref{strict-inter},
$\lambda$ is an eigenvalue of $A'$, the corresponding eigenvector is
$\x_2$ and further $\x_1\neq 0$. Therefore, it follows that
$B^T\x_1=0$, i.e. the sum of  some entries of $\x_1$ is $0$. From
(\ref{eq*}), we obtain
$$(\lambda I- A'')\x_1=B\x_2$$ and then by multiplying by $\x_1^T$ from
the left we obtain
$$\x_1^T(\lambda I- A'')\x_1=0$$
and consequently
$$\min_{\y\neq 0}\frac{\y^T(\lambda I- A'')\y}{\y^T\y}\leq
\frac{\x_1^T(\lambda I-A'')\x_1}{\x_1^T\x_1}\leq \max_{\y\neq 0}\frac{\y^T(\lambda I- A'')\y}{\y^T\y}.$$ Hence,
$$\lambda_{n''} (\lambda I- A'')\leq 0 \leq \lambda_1 (\lambda
I- A''),$$ where $n''=|V(G'')|=M_s+N_s$. Since, $\lambda_{n''}
(\lambda I- A'')=\lambda-\lambda_1(G'')$ and $\lambda_1 (\lambda I-
A'')=\lambda-\lambda_{n''}(G'')$ it follows
\begin{equation}\label{ineq}
\lambda_{n''}(G'')\leq \lambda\leq \lambda_1(G'').
\end{equation}
Moreover, $\lambda\neq \lambda_1(G'')$. Equality holds if and only
if $\x_1$ is an eigenvector of $G''$ for $\lambda_1(G'')$, that is
not possible due to the condition (\ref{eq**}) and positivity of
$\x_1$ as an eigenvector corresponding to the largest eigenvalue of
a connected graph. Similarly, if $\lambda=\lambda_{n''}(G'')$, then
$\x_1$ is the corresponding eigenvector and from (\ref{eq*}) it
follows $B\x_2=0$. This implies that $\lambda$ is a non-main
eigenvalue of a nested split  graph  $G'$,  a contradiction by
Theorem \ref{nsg_mult}.
\end{proof}

\begin{corollary}
Let \begin{eqnarray*}
G&=&\nsg(m_1, \ldots, m_h; n_1, \ldots, n_h),\\
G_s''&=&\nsg(m_1, \ldots, m_s; n_1, \ldots, n_s),
\end{eqnarray*}
$I_s=(\lambda_{n_s''}(G_s''),\lambda_1(G_s''))$, where
$n_{s''}=|(V(G_s'')|$ and $\lambda\in \Sp(G)$. If $\lambda\notin
\bigcup_{s=2}^{h-1} I_s$, then all vertices in $U$ are downer
vertices for $\lambda$.
\end{corollary}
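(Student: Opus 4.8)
The plan is to reduce the corollary to the theorem immediately preceding it via the contrapositive. Suppose that some vertex in $U$ is \emph{not} a downer for $\lambda$; since all vertices within a given cell $U_j$ (or $V_j$) are of the same type, and since for $\lambda\neq 0,-1$ every vertex of a threshold graph is either a downer or neutral, this means that all vertices of some cell $U_s$ are neutral for $\lambda$. First I would dispose of the degenerate values of $s$: by Theorem~\ref{downers}, all vertices of $U_1$ and of $U_h$ are downers for $\lambda$ (recall $\lambda$ is never the largest eigenvalue, since $\lambda$ simple and a nonmain eigenvalue would fail to be the Perron value of the connected graph $G$; and $\lambda\ne 0,-1$ by hypothesis on the interval structure, or one checks separately). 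Hence the offending cell satisfies $2\le s\le h-1$, which is exactly the range in which the preceding theorem applies.

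Next I would invoke that theorem directly: with $G''=G_s''=\nsg(m_1,\ldots,m_s;n_1,\ldots,n_s)$ and $n''=n_s''=|V(G_s'')|$, the hypothesis that all vertices of $U_s$ are neutral for $\lambda$ gives
\[
\lambda_{n_s''}(G_s'')<\lambda<\lambda_1(G_s''),
\]
i.e. $\lambda\in I_s=(\lambda_{n_s''}(G_s''),\lambda_1(G_s''))$. Since $2\le s\le h-1$, this shows $\lambda\in\bigcup_{s=2}^{h-1} I_s$, contradicting the hypothesis $\lambda\notin\bigcup_{s=2}^{h-1} I_s$. Therefore no cell $U_s$ can consist of neutral vertices, so every vertex of $U$ is a downer for $\lambda$.

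The only point needing a little care — and the step I expect to be the main (minor) obstacle — is handling the eigenvalues $\lambda=0$ and $\lambda=-1$, for which Theorem~\ref{downers} and the preceding theorem are not stated, and at which $U$ may genuinely contain Parter (hence non-downer) vertices. I would address this by noting that the hypothesis ``$\lambda\notin\bigcup_{s=2}^{h-1}I_s$'' should be read together with the standing assumption $\lambda\ne 0,-1$ inherited from the surrounding discussion (all the neutral-vertex theorems in this section carry that restriction), or alternatively by observing that for $\lambda=0,-1$ one can appeal to the Remark following Theorem~\ref{nsg_mult}: the vertices of $U$ carrying nonzero eigenvector entries are precisely governed there, and in fact at least one vertex of $U$ is always a downer, but since the corollary as phrased targets the generic eigenvalues I would simply state ``assume $\lambda\ne0,-1$'' at the outset. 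With that convention the argument is a two-line reduction; essentially all the work has already been done in the preceding theorem.
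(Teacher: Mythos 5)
Your proof is correct and is exactly the intended argument: the paper states this corollary without a written proof because it is the immediate contrapositive of the preceding theorem (together with Theorem~\ref{downers} to rule out $s=1$ and $s=h$, and the standing convention $\lambda\ne 0,-1$). The only blemish is your parenthetical reason that $\lambda$ cannot be the largest eigenvalue --- the right justification is simply that the Perron eigenvector of a connected graph is positive, so every vertex is a downer for $\lambda_1$; it is not that $\lambda$ would be non-main (by Theorem~\ref{nsg_mult} every eigenvalue $\ne 0,-1$ of a threshold graph is in fact main).
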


\begin{example}
Let $G=\nsg(1,1,5; 1,1,8)$. Then $I_2=(-1.48,2.17)$ and besides
$\lambda_1$ and $\lambda_n$ all vertices in $U$ are downer for
$\lambda_{n-2}$ and $\lambda_{n-1}$, as well.
\end{example}

\section{Vertex types in chain graphs}\label{cg}

\noindent Chain graph can be defined as follows: a graph is a chain
graph if and only if it is bipartite and the neighborhoods of the
vertices in each color class form a chain with respect to inclusion.
For this reason, if connected (as was the case with threshold
graphs),  it is also called {\em double nested graph} \cite{bcrs}.

Non-zero eigenvalues of chain graphs are simple (see Theorem~\ref{dng_mult} below). As the subgraphs of any chain graph are also chain graphs,
it follows that there is no Parter vertex in any chain graphs with respect to non-zero eigenvalues.
A question raises whether they can have neutral vertices. In \cite{AlAS} it is conjectured that this cannot be the case.

\begin{conjecture}\label{conj} {\rm (\cite{AlAS})} In any chain graph, every vertex is downer with respect to every non-zero eigenvalue.
\end{conjecture}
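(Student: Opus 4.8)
The plan is first to attempt Conjecture~\ref{conj} by the propagation technique used for threshold graphs, and then --- when that attempt stalls --- to convert the obstruction into a counterexample. Since the nonzero eigenvalues of a connected chain graph $G$ are simple (Theorem~\ref{dng_mult}) and, as noted above, a chain graph has no Parter vertex for a nonzero eigenvalue, Remark~\ref{downer} reduces the conjecture to the statement that the unique $\la$-eigenvector $\x$ of a nonzero eigenvalue $\la$ --- which by the sum rule is constant on each cell $U_i$, $V_i$ of the double nested structure --- does not vanish identically on any cell. I would therefore fix $\la\ne0$ with eigenvector $\x$, suppose $\x\equiv0$ on some cell, and push the zeros along the chain with the sum rule, aiming at $\x=\mathbf{0}$.

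For the extremal cells this works and already yields a weak form of the conjecture. If $\x$ vanishes on $U_1$, which is adjacent to all of $V$, then $\sum_{v\in V}\x(v)=0$; since a vertex of $V_h$ has all its neighbours in $U_1$, the sum rule gives $\x\equiv0$ on $V_h$; then a vertex $u_2$ of $U_2$, adjacent to $V_1\cup\cdots\cup V_{h-1}$, has $\la\x(u_2)=\sum_{j=1}^{h-1}n_j\x(v_j)=0$, so $\x\equiv0$ on $U_2$; then $V_{h-1}$, then $U_3$, and so on, forcing $\x=\mathbf{0}$, a contradiction. Hence $U_1$ and, symmetrically, $V_1$ consist only of downers, and since a vertex of $U_h$ (resp.\ $V_h$) has its whole neighbourhood in $V_1$ (resp.\ $U_1$), the same holds for $U_h$ and $V_h$. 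This recovers the exact analog of Theorem~\ref{downers}, here with no exceptional case.

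The obstacle is that for an internal cell $U_s$ ($2\le s\le h-1$) the propagation stalls: vanishing on $U_s$ only forces the partial sum $\sum_{j=1}^{h-s+1}n_j\x(v_j)$ to be $0$, and because the colour class $V$ is a co-clique, not a clique, this is too weak to make a further cell vanish. That one cannot close the argument --- and, in particular, that the contradiction used in the threshold case (an eigenvalue $\ne0,-1$ is main, Theorem~\ref{nsg_mult}) has no counterpart here, since the neutral-cell eigenvalue is allowed to be main or non-main --- is what makes me expect the conjecture to be false. So I would pass to the reduced eigenvalue problem: for $G=\dng(m_1,\dots,m_h;n_1,\dots,n_h)$ and a cell-constant vector with value $a_i$ on $U_i$ and $b_j$ on $V_j$, the eigenequations read $\la a_i=\sum_{j=1}^{h-i+1}n_jb_j$ and $\la b_j=\sum_{i=1}^{h-j+1}m_ia_i$. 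Imposing $a_s=0$ for an internal $s$ and solving, already with $h=3$ one finds that $\la^2$ is pinned down by the cell sizes together with a single homogeneous relation among them, and this relation has many small positive integer solutions; each gives a connected chain graph, a nonzero (hence simple) eigenvalue $\la$, and an entire cell of neutral vertices for $\la$, disproving Conjecture~\ref{conj}.

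Finally I would collect the weak versions that survive: the extremal-cell statement above; the facts that every vertex is a downer for $\la_1$ and, by bipartiteness, for $-\la_1$; and chain-graph analogs of the interlacing obstructions of Theorems~\ref{strict-inter} and \ref{strict-inter2}, which for a given $\la$ should restrict which internal cells can be neutral and where $\la$ then lies in $\Sp(G)$. The main difficulty of the whole problem is the conceptual step in the third paragraph: seeing that the natural induction simply fails to terminate for internal cells, isolating the precise structural reason for it, and then reading off from the reduced system the arithmetic condition that yields an honest counterexample.
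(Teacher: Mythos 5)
Your proposal is correct in the essential point --- the statement is false, and the right response is a counterexample --- and your route to the counterexample is genuinely different from the paper's. The paper disproves the conjecture with half graphs $H(h)$ (all cells of size $1$), exhibiting periodic eigenvectors: the pattern $(1,0,-1,-1,0,1)$ repeated gives an eigenvector for $\la=\pm1$ when $h\equiv1$ or $4\pmod 6$, and a period-$10$ pattern built from $\om$ with $\om^2+\om-1=0$ gives one for $\la=\pm\om$ when $h\equiv7$ or $2\pmod{10}$; the smallest such example is $H(4)$ on $8$ vertices. You instead fix $h=3$ and vary the cell sizes, and your sketched computation does close: imposing $a_2=0$ in the cell-reduced system for $\dng(m_1,m_2,m_3;n_1,n_2,n_3)$ forces $\la^2=m_1n_3$ together with the single relation $m_1n_3(n_1+n_2)=n_1n_2m_3$ (with $m_2$ free), which has many positive integer solutions; e.g.\ $\dng(1,1,2;1,1,1)$ on $7$ vertices has eigenvalue $\la=1$ with eigenvector taking values $(1,0,-1)$ on $U_1,U_2,U_3$ and $(-1,1,1)$ on $V_1,V_2,V_3$, so the vertex of $U_2$ is neutral. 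Since nonzero eigenvalues of chain graphs are simple and Parter vertices are excluded, this is a bona fide counterexample, and it is even smaller than the paper's. What your approach does not give for free is the paper's second family: the half-graph construction produces neutral vertices for the irrational eigenvalues $\pm\om$ and, combined with the duplication trick (adding vertices with the same neighbourhood as a zero-entry vertex), infinitely many structurally varied counterexamples. Your "weak versions" (downer status of $U_1\cup U_h\cup V_1\cup V_h$, and of every vertex for $\pm\la_1$) coincide with the paper's Theorem on extremal cells. The one presentational gap is that you assert rather than exhibit the arithmetic solution; since the whole content of a disproof is the explicit example, you should write one down and verify the sum rule on it, as done above.
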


We disprove Conjecture~\ref{conj} in this section. Indeed,
Theorems~\ref{1} and \ref{om} will show that there are infinitely
many counterexamples for this conjecture. In spite of that, a couple
of weak versions of the conjecture are true.


\begin{remark}\label{struc} {\em (Structure of chain graphs)} As it was observed in \cite{bcrs},
the color classes of any chain graph $G$ can be partitioned into $h$
non-empty cells $U_1,\ldots, U_h$ and $V_1,\ldots, V_h$  such that
$N(u)=V_1\cup\cdots\cup V_{h-i+1}~~\hbox{for any}~ u\in U_i,~1\le
i\le h.$ If $m_i=|U_i|$ and $n_i=|V_i|$, then we write
$\dng(m_1,\ldots, m_h;n_1, \ldots, n_h)$ (see Fig. \ref{chgr}).
\end{remark}

\begin{figure}[hbtp]
\centering

\scalebox{1.0}{
\begin{tikzpicture}[line width=0.8pt]
\tikzset{every node/.style={draw,shape=circle,minimum
height=0.4cm,minimum width=0.4cm,inner sep=0pt,fill=none}}

\foreach \x/\i in {1/1,2.2/2,5/3,6.2/4} {\node[anchor=center] at
(1,\x) (p\i) {};}
\node [draw=none,anchor=east] at (1,0.62) {\small$U_1$}; \node
[draw=none,anchor=east] at (1,1.83) {\small$U_2$}; \node
[draw=none,anchor=east] at (1.2,4.65) {\small$U_{h\!-\!1}$}; \node
[draw=none,anchor=east] at (0.95,5.9) {\small$U_h$};
\node [draw=none,anchor=center] at (0.7,1.32) {\small$m_{1}$}; \node
[draw=none,anchor=center] at (0.7,2.52) {\small$m_{2}$}; \node
[draw=none,anchor=center] at (0.8,5.35) {\small$m_{h\!-\!1}$}; \node
[draw=none,anchor=center] at (0.65,6.5) {\small$m_{h}$};
\foreach \x/\i in {1/1,2.2/2,5/3,6.2/4} {\node[anchor=center] at
(4.2,\x) (q\i) {};} \node [draw=none,anchor=west] at (4.25,0.62)
{\small$V_h$}; \node [draw=none,anchor=west] at (4.25,1.83)
{\small$V_{h-1}$}; \node [draw=none,anchor=west] at (4.25,4.6)
{\small$V_{2}$}; \node [draw=none,anchor=west] at (4.25,5.8)
{\small$V_1$};
\node [draw=none,anchor=west] at (4.3,1.3) {\small$n_{h}$}; \node
[draw=none,anchor=west] at (4.3,2.5) {\small$n_{h-1}$}; \node
[draw=none,anchor=west] at (4.3,5.3) {\small$n_{2}$}; \node
[draw=none,anchor=west] at (4.3,6.5) {\small$n_{1}$};
\foreach \i in {1,2,4} {\draw (p1) -- (q\i);} \draw (p1.40) --
(q3.245); \draw (p2) -- (q2); \draw (p2.25) -- (q3); \draw (p2) --
(q4.215); \foreach \i in {3,4} {\draw (p3) -- (q\i);} \draw (p4) --
(q4);
%
%
\fill (1,3.9) circle (1.2pt) ++(0,-0.3) circle (1.2pt) ++(0,-0.3)
circle (1.2pt); \fill (4.2,3.9) circle (1.2pt) ++(0,-0.3) circle
(1.2pt) ++(0,-0.3) circle (1.2pt);

\end{tikzpicture}
}

\caption{The chain graph $G=\dng(m_1, \ldots, m_h;n_1, \ldots, n_h)$.}%
\label{chgr}
\end{figure}
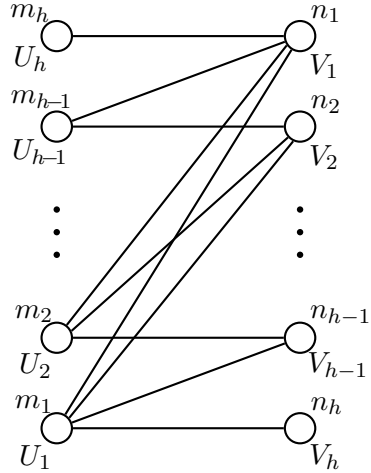


The spectrum of any chain graph has the following properties (see
\cite{AlAS}):
\begin{theorem}\label{dng_mult}
Let $G=\dng(m_1, \ldots, m_h; n_1, \ldots, n_h)$. Then the spectrum
of $G$ is symmetric about the origin and it contains:
\begin{itemize}
\item  $h$ positive simple eigenvalues greater then $\frac{1}{2}$;
\item $h$ negative simple eigenvalues less than $-\frac{1}{2}$;
\item eigenvalue $0$ of multiplicity $M_h+N_h-2h$.
\end{itemize}
\end{theorem}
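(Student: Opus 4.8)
\noindent Here is the approach I would take. Since $G$ is bipartite, its spectrum is symmetric about the origin (negating a $\la$-eigenvector on one colour class produces a $(-\la)$-eigenvector), so it suffices to pin down the nonzero eigenvalues and the nullity. The plan is to pass to a $2h\times 2h$ quotient. By the sum rule, for a nonzero eigenvalue $\la$ any $\la$-eigenvector $\x$ is constant on each cell $U_i$ and $V_j$, because two vertices of a common cell have equal neighbourhoods. Hence every nonzero eigenvalue of $A=A(G)$ occurs among the eigenvalues of the restriction of $A$ to the $2h$-dimensional space $W$ of cell-constant vectors. In the natural basis of $W$ this restriction is
\[
\widehat A=\begin{pmatrix}0 & ND_V\\ ND_U & 0\end{pmatrix},\qquad
D_U=\mathrm{diag}(m_1,\dots,m_h),\quad D_V=\mathrm{diag}(n_1,\dots,n_h),
\]
where $N=[\nu_{ij}]_{i,j=1}^h$ with $\nu_{ij}=1$ if $i+j\le h+1$ and $\nu_{ij}=0$ otherwise. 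Observe that $N=RL$ with $R$ the reversal permutation matrix and $L$ the all-ones lower triangular matrix; in particular $N$ is symmetric and $\det N=\pm1$.

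Because $\det\widehat A=(-1)^h(\det N)^2\prod_i m_i\prod_j n_j\neq0$, the matrix $\widehat A$ has exactly $2h$ nonzero eigenvalues (with multiplicity), and, as noted, every nonzero eigenvalue of $A$ is one of them (the orthogonal complement of $W$ being $A$-invariant as well). Hence $A$ has exactly $2h$ nonzero eigenvalues; by the symmetry of the spectrum $h$ are positive and $h$ are negative, and $\mul(0,G)=n-2h=M_h+N_h-2h$, which is the stated multiplicity of $0$.

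It remains to show the $2h$ nonzero eigenvalues are simple and that $|\la|>\tfrac12$ for each. From the eigenvalue system for $\widehat A$ one gets $\la^2 a=ND_VND_U\,a$, so $\la^2$ is an eigenvalue of $ND_VND_U$, which (since $N^{T}=N$) is similar to $CC^{T}$ with $C=D_U^{1/2}ND_V^{1/2}$; as $C$ is invertible, $CC^{T}$ is positive definite, so the $h$ values $\la^2$ are positive. For \emph{simplicity} I would invert: $1/\la^2$ is an eigenvalue of $D_U^{-1}N^{-1}D_V^{-1}N^{-1}$, similar to $\Gamma\Gamma^{T}$ with $\Gamma=D_U^{-1/2}L^{-1}RD_V^{-1/2}$ (using $N^{-1}=L^{-1}R$). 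Since $L^{-1}$ is bidiagonal, $\Gamma\Gamma^{T}$ works out to be a symmetric \emph{tridiagonal} matrix all of whose off-diagonal entries are nonzero; an irreducible Jacobi matrix has simple spectrum, so the $h$ values of $1/\la^2$, hence of $\la^2$, are pairwise distinct, and thus the $2h$ numbers $\pm\la$ are $2h$ distinct simple eigenvalues of $A$. For the \emph{bound}, $\la^2\ge\lambda_{\min}(CC^{T})=\sigma_{\min}(C)^2$, and since $m_i,n_j\ge1$ and $R$ is orthogonal, $\sigma_{\min}(C)\ge\sigma_{\min}(N)=\sigma_{\min}(L)=\|L^{-1}\|_2^{-1}$. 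Now $(L^{-1})^{T}L^{-1}$ is the tridiagonal matrix with diagonal $(2,\dots,2,1)$ and all off-diagonal entries $-1$, so $4I-(L^{-1})^{T}L^{-1}$ is irreducibly diagonally dominant with positive diagonal, hence positive definite; therefore $\|L^{-1}\|_2<2$, $\sigma_{\min}(L)>\tfrac12$, and $\la^2>\tfrac14$.

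The passage to $\widehat A$ and the eigenvalue count are routine bookkeeping. The two genuine points are recognizing the tridiagonal (Jacobi) reformulation behind $\Gamma\Gamma^{T}$, which yields simplicity, and pinning down the sharp constant $\tfrac12$, which reduces to the inequality $\sigma_{\min}(L)>\tfrac12$ for the all-ones lower triangular matrix $L$; I expect the cleanest justification of the latter to be the positive definiteness of $4I-(L^{-1})^{T}L^{-1}$.
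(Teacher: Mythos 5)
Your proof is correct and complete. Note first that the paper itself does not prove this theorem: it is quoted from the reference [AlAS] (Alazemi, An\dj eli\'c, Simi\'c, \emph{Linear Algebra Appl.} 505 (2016)), so there is no in-paper argument to compare against; what you have written is a self-contained derivation of a cited result. Your route --- restrict $A$ to the space $W$ of cell-constant vectors (an equitable partition, so $W^\perp$ is $A$-invariant and carries only the eigenvalue $0$), observe $\det\widehat A\neq0$ to get exactly $2h$ nonzero eigenvalues and hence the nullity $M_h+N_h-2h$, reduce simplicity of $\la^2$ to the simple spectrum of an irreducible Jacobi matrix via $N^{-1}=L^{-1}R$, and obtain the bound from $\la^2\ge\sigma_{\min}(N)^2=\sigma_{\min}(L)^2>\tfrac14$ --- is essentially the standard divisor/quotient approach used in the literature on chain graphs, and every step checks out: the factorization $N=RL$, the block determinant, the injectivity of $(a;b)\mapsto a$ on a $\la$-eigenspace of $\widehat A$ (so that simplicity of $\la^2$ for $ND_VND_U$ forces simplicity of $\pm\la$), the tridiagonality of $L^{-1}\widetilde D(L^{-1})^T$ with nonzero off-diagonal entries, and the positive definiteness of $4I-(L^{-1})^TL^{-1}$ giving the sharp (in the limit $h\to\infty$) constant $\tfrac12$. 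The only cosmetic point worth making explicit in a final write-up is that $\widehat A$ is similar to a symmetric matrix (conjugate by $\mathrm{diag}(D_U,D_V)^{1/2}$), so its eigenvalues are real and its algebraic and geometric multiplicities agree, which you implicitly use when equating multiplicities of $A|_W$ and $\widehat A$.
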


\begin{remark}
On the contrary from threshold graphs  nonzero eigenvalues of chain
graphs need not be main. For more information see \cite{Pt-srb}.
\end{remark}

\begin{theorem}\label{chain} Let $G=\dng(m_1, \ldots, m_h; n_1, \ldots, n_h)$ be a chain graph. Then the vertices in $U_1\cup U_h\cup V_1\cup V_h$ are downer
 for any non-zero eigenvalue.
\end{theorem}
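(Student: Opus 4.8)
The plan is the following. Fix a nonzero eigenvalue $\la$ of $G$. By Theorem~\ref{dng_mult} it is simple, so it has an eigenvector $\x$ that is unique up to a scalar, and by Remark~\ref{downer} a vertex $v$ is a downer for $\la$ if and only if $\x(v)\ne 0$. Since $\la\ne 0$, the sum rule \eqref{sumrule} makes $\x$ constant on each cell $U_i$ and on each cell $V_j$; write $a_i$ and $b_j$ for these common values. Using $N(u)=V_1\cup\cdots\cup V_{h-i+1}$ for $u\in U_i$ and the dual relation $N(v)=U_1\cup\cdots\cup U_{h-j+1}$ for $v\in V_j$, the eigenvalue equations become
\[
\la a_i=\sum_{j=1}^{h-i+1}n_j b_j,\qquad \la b_j=\sum_{i=1}^{h-j+1}m_i a_i\qquad(1\le i,j\le h).
\]
So it suffices to show that $a_1,a_h,b_1,b_h$ are all nonzero.

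The core of the proof is that $a_1\ne 0$. Suppose $a_1=0$. I would then prove by induction on $t=0,1,\dots,h$ that $a_1=\cdots=a_t=0$, that $b_{h-t+1}=\cdots=b_h=0$, and that $\sum_{j=1}^{h-t}n_jb_j=0$ (empty sums being $0$). The base case $t=0$ is just the first equation for $i=1$ together with $a_1=0$. For the step from $t$ to $t+1$: the equation for a vertex of $U_{t+1}$ reads $\la a_{t+1}=\sum_{j=1}^{h-t}n_jb_j=0$, so $a_{t+1}=0$; the equation for a vertex of $V_{h-t}$ then reads $\la b_{h-t}=\sum_{i=1}^{t+1}m_ia_i=0$, so $b_{h-t}=0$; and subtracting $n_{h-t}b_{h-t}$ from $\sum_{j=1}^{h-t}n_jb_j=0$ yields $\sum_{j=1}^{h-t-1}n_jb_j=0$. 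Taking $t=h$ gives $\x=\mathbf{0}$, a contradiction, so $a_1\ne 0$. Interchanging the two color classes gives $\dng(m_1,\dots,m_h;n_1,\dots,n_h)\cong\dng(n_1,\dots,n_h;m_1,\dots,m_h)$, and applying the fact just proved to this isomorphic graph yields $b_1\ne 0$.

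The two remaining cells then come for free. If $a_h=0$, the equation for a vertex of $U_h$ gives $\la a_h=n_1b_1=0$, hence $b_1=0$, contradicting the previous step; so $a_h\ne 0$. Likewise, if $b_h=0$, the equation for a vertex of $V_h$ gives $\la b_h=m_1a_1=0$, hence $a_1=0$, a contradiction; so $b_h\ne 0$. Therefore every vertex of $U_1\cup U_h\cup V_1\cup V_h$ is a downer for $\la$.

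The step I expect to be the real obstacle is $a_1\ne 0$. For threshold graphs the analogous statement (Theorem~\ref{downers}) could exploit that the eigenvalue in question is non-main, but nonzero eigenvalues of chain graphs may be main, so that shortcut is unavailable and one is forced to run the telescoping cascade above. The only delicate point is bookkeeping with the indices — in particular, checking that the reduced partial sum $\sum_{j=1}^{h-t-1}n_jb_j$ is exactly the right-hand side of the $U_{t+2}$ equation used at the next stage, and handling the empty-sum endpoints $t=0$ and $t=h$.
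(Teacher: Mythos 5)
Your proposal is correct and follows essentially the same route as the paper: assume the eigenvector vanishes on the extremal cell, then use the sum rule to run a telescoping cascade ($a_{t+1}=0$ forces $b_{h-t}=0$ and shortens the partial sum) until the whole eigenvector vanishes, a contradiction with simplicity of the nonzero eigenvalue. The only cosmetic difference is that the paper propagates the cascade from the pair $(U_1,V_h)$ jointly and invokes the color-class symmetry for $U_h$ and $V_1$, whereas you isolate $a_1\ne0$ first and recover $b_1,a_h,b_h$ from symmetry and the single equations $\la a_h=n_1b_1$, $\la b_h=m_1a_1$.
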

\begin{proof}Let $\x$ be any $\la$-eigenvector of $G$. Assume that $u_1\in U_1$ and $v_h\in V_h$. By the sum rule
$\lambda \x(v_h)=m_1\x(u_1)$. Since, $\la\neq 0$, $u_1$ and $v_h$
are both downer or neutral. Let $X=\sum_{w\in V}\x(w)$ and assume on
the contrary that $\x(u_1)=\x(v_h)=0$. Again, by the sum rule $\la
\x(u_2)=X-n_h\x(v_h)=0$  and consequently $\x(u_2)=0,$ for any
$u_2\in U_2$ as well as $\x(v_{h-1})=0$ for any $v_{h-1}\in
V_{h-1}$. Next, for any $u_3\in U_3$,
$$\la\x(u_3)=X-n_{h-1}\x(v_{h-1})-n_h\x(v_h)=0$$
It follows that $\x$ is zero on $U_3$, too. Continuing this
argument, it follows that $\x=\bf0$, a contradiction.\end{proof}

The following proposition states some facts related to vertex types
in chain graphs. The proofs are similar to those  in Section
\ref{th} and therefore omitted here.

\begin{theorem}
Let
\begin{eqnarray*}
G&=&\dng(m_1, \ldots, m_h;n_1, \ldots, n_h),\\
G_s'&=&\dng(m_1, \ldots,m_{s-1}; n_{h-s+2}, \ldots, n_h),\\
G_s''&=&\dng(m_s, \ldots, m_h; n_1, \ldots, n_{h-s+1}),
\end{eqnarray*}
$1<s<h$, $\la_i\in \Sp(G)\setminus\{0\}$, $n_s'=\sum_{j=1}^{s-1}
(m_j+n_{h-j+1})$, $n_s''=n-n_s'$, $\Sp(G_{s}')=\{\la'_1,\ldots,
\la'_{n_s'}\}$. Then
\begin{itemize}
\item
For any $j=1,\ldots, h-1$ at least one of $U_j$, $U_{j+1}$ contains
only downer vertices for $\lambda_i$.

\item If all vertices in $U_s$ for some $2<s<h-1$ are neutral
for $\lambda_i$, then \begin{itemize}
\item $\lambda_i$ is an eigenvalue
of $G_s'$ and $\la_i=\la'_j$,  for some $j\in \{1,\ldots, n_{s}'\}$.
If $\lambda_i$ is main, then $j<i< n-n_s'+j$. If $i\leq n_s'$ then
$\lambda_i\neq \la'_{n_s'}$.
\item
$\lambda_i\in[\lambda_{n_s''}(G_s''),\lambda_1(G_s''))$.
\item If $\la_i$ is a main eigenvalue then $\lambda_i\in (\lambda_{n_s''}(G_s''),\lambda_1(G_s''))$.
\item If $\lambda_i\notin \bigcup_{s=2}^{h-1}
[\lambda_{n_s''}(G_s''),\lambda_1(G_s''))$ then all vertices in
$V(G)$ are downer vertices for $\lambda_i$.
\end{itemize}
\end{itemize}
\end{theorem}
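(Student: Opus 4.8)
The plan is to derive the four items as chain-graph counterparts of the corresponding results of Section~\ref{th}. Since in a double nested graph the nesting is ``reversed'' ($N(u)=V_1\cup\cdots\cup V_{h-i+1}$ for $u\in U_i$), the sum rule \eqref{sumrule}, applied to a $\la_i$-eigenvector $\x$ (which, as $\la_i\neq0$, is constant on each cell by \eqref{sumrule}; we write $\x(u_j),\x(v_j)$ for its common values there), reads
\begin{equation}\label{sr-chain}
\la_i\x(u_k)=\sum_{j=1}^{h-k+1}n_j\x(v_j)\quad\text{and}\quad
\la_i\x(v_k)=\sum_{j=1}^{h-k+1}m_j\x(u_j)\qquad(1\le k\le h).
\end{equation}
The essential observation is that the role played for threshold graphs by ``every eigenvalue $\neq0,-1$ is main'' (Theorem~\ref{nsg_mult}) is taken over here by two facts: non-zero eigenvalues of chain graphs are simple (Theorem~\ref{dng_mult}), and the vertices of $U_1\cup U_h\cup V_1\cup V_h$ are downers for them (Theorem~\ref{chain}). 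In particular, by simplicity a vertex $v$ is neutral for $\la_i\neq0$ exactly when the (essentially unique) $\la_i$-eigenvector vanishes at $v$.

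For the first item, suppose for contradiction that $\x$ vanishes on $U_j\cup U_{j+1}$ for some $j$ (the cases $j\in\{1,h-1\}$ being immediate from Theorem~\ref{chain}). Subtracting the two instances of the first relation in \eqref{sr-chain} at $u_j$ and $u_{j+1}$ gives $n_{h-j+1}\x(v_{h-j+1})=0$, so $\x(v_{h-j+1})=0$; then the second relation in \eqref{sr-chain} at $v_{h-j+1}$ and at $v_{h-j+2}$, together with $\x(v_{h-j+1})=0=\x(u_j)$, forces $\sum_{l=1}^{j-1}m_l\x(u_l)=0$, i.e. $\la_i\x(v_{h-j+2})=0$, whence $\x(v_{h-j+2})=0$; and subtracting the first relation at $u_{j-1}$ and at $u_j$, using $\x(v_{h-j+2})=0=\x(u_j)$, yields $\x(u_{j-1})=0$. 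Thus $\x$ vanishes on $U_{j-1}\cup U_j$ as well, and iterating downward we reach $\x(u_1)=0$, contradicting Theorem~\ref{chain}. (Note only $\la_i\neq0$ is used here, never $\la_i\neq-1$, unlike in the threshold-graph proofs.)

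For the second item, assume all of $U_s$ is neutral, so $\x(u_s)=0$ and hence $\sum_{j=1}^{h-s+1}n_j\x(v_j)=0$. Let $\x'$ be the restriction of $\x$ to $V(G_s')=U_1\cup\cdots\cup U_{s-1}\cup V_{h-s+2}\cup\cdots\cup V_h$. Using that vanishing sum, the first relation in \eqref{sr-chain} at $u_k$ ($k\le s-1$) becomes $\la_i\x(u_k)=\sum_{j=h-s+2}^{h-k+1}n_j\x(v_j)$, which is precisely the $G_s'$-sum rule at $u_k$, while the second relation at $v_k$ ($k\ge h-s+2$) involves only $U_1,\dots,U_{h-k+1}\subseteq V(G_s')$ and so survives verbatim; hence $\x'$ is a $\la_i$-eigenvector of $G_s'$, non-zero since $\x$ is non-zero on $U_1$ (Theorem~\ref{chain}), and $\la_i=\la'_j$ for some $j$. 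Interlacing (Theorem~\ref{inter}) for $G_s'\subseteq G$ gives $\la_{n-n_s'+j}\le\la_i\le\la_j$, hence $j\le i\le n-n_s'+j$; an equality here would, by Lemma~\ref{eqinter}, produce a $\la_i$-eigenvector of $G$ vanishing on $V(G)\setminus V(G_s')\supseteq U_h$, which by simplicity is a scalar multiple of $\x$ and so contradicts that $U_h$ consists of downers — this gives $j<i<n-n_s'+j$, and $\la_i\neq\la'_{n_s'}$ when $i\le n_s'$ follows exactly as in Theorem~\ref{strict-inter}.

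For the interval claims, set $G_s''=\dng(m_s,\dots,m_h;n_1,\dots,n_{h-s+1})$, induced on the complementary set $U_s\cup\cdots\cup U_h\cup V_1\cup\cdots\cup V_{h-s+1}$, and split $A(G)$ into $2\times2$ blocks with diagonal blocks $A'=A(G_s')$, $A''=A(G_s'')$ and off-diagonal block $B$, which records only the (complete bipartite) set of edges between $U_1\cup\cdots\cup U_{s-1}$ and $V_1\cup\cdots\cup V_{h-s+1}$. Writing $\x$ accordingly as $\x_1$ on $V(G_s')$ and $\x_2$ on $V(G_s'')$, the preceding item gives $A'\x_1=\la_i\x_1$ with $\x_1\neq0$, so $B\x_2=0$ and $(\la_iI-A'')\x_2=B^T\x_1$; since $\x_2\neq0$ (it carries the entries on $U_h$), $\x_2^T(\la_iI-A'')\x_2=(B\x_2)^T\x_1=0$, and Rayleigh's bounds yield $\la_{n_s''}(G_s'')\le\la_i\le\la_1(G_s'')$. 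Here $B\x_2=0$ is equivalent to $\sum_{w\in V_1\cup\cdots\cup V_{h-s+1}}\x_2(w)=0$; hence equality with $\la_1(G_s'')$ is impossible ($\x_2$ would be the strictly positive Perron eigenvector of the connected graph $G_s''$), and equality with $\la_{n_s''}(G_s'')$ is impossible as well ($\x_2$ would then be the bottom eigenvector of the connected bipartite graph $G_s''$, which is nowhere zero and of a single sign on each colour class), so in fact $\la_i\in(\la_{n_s''}(G_s''),\la_1(G_s''))$, which a fortiori gives the half-open inclusion and the main-eigenvalue refinement. Finally, the last item is the contrapositive of the others combined with Theorem~\ref{chain}: any vertex neutral for $\la_i$ lies in some $U_s$ or $V_s$ with $2\le s\le h-1$, and the interval claim — applied to $G$, or, for a $V_s$, to the colour-swapped copy $\dng(n_1,\dots,n_h;m_1,\dots,m_h)\cong G$, the end blocks $s\in\{2,h-1\}$ being handled by a direct variant of the same estimate — places $\la_i$ in one of the listed intervals. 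The step I expect to need the most care is exactly this bookkeeping of which cells survive in $G_s'$ and $G_s''$ and which index ranges occur in \eqref{sr-chain}, together with the systematic replacement, throughout the Section~\ref{th} arguments, of each appeal to Theorem~\ref{nsg_mult} by the pair Theorem~\ref{dng_mult}/Theorem~\ref{chain}, the latter being precisely what makes the equality cases of interlacing and of the Rayleigh bounds collapse.
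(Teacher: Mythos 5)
Your write-up fills in a proof the paper omits (the text only says the arguments are similar to those of Section~\ref{th}), and for the first three items it does so correctly and along the intended lines: the sum rule in the reversed-nesting form, reduction to $G_s'$ and $G_s''$, interlacing plus Lemma~\ref{eqinter}, and the Rayleigh-quotient estimate with the block decomposition of $A(G)$. Your one genuine deviation is an improvement: every appeal to main-ness in the Section~\ref{th} proofs (unavailable for chain graphs, as the paper itself notes) is replaced by the combination of simplicity (Theorem~\ref{dng_mult}) with the fact that $U_1,U_h,V_1,V_h$ consist of downers (Theorem~\ref{chain}). This correctly disposes of the equality cases of interlacing and of the Rayleigh bounds, and in fact yields the strict chain $j<i<n-n_s'+j$ and the open interval $(\lambda_{n_s''}(G_s''),\lambda_1(G_s''))$ \emph{without} assuming $\la_i$ is main, i.e.\ slightly more than the statement claims; the bookkeeping of which cells survive in $G_s'$ and $G_s''$ is also right.

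The gap is in the last sub-item. If the neutral vertices sit in some $V_t$ rather than some $U_t$, applying your argument to the colour-swapped copy places $\la_i$ in an interval attached to the subgraph induced on $U_1\cup\cdots\cup U_{h-t+1}\cup V_t\cup\cdots\cup V_h$, that is, to $\dng(m_1,\ldots,m_{h-t+1};n_t,\ldots,n_h)$. This is in general a different graph from every $G_s''=\dng(m_s,\ldots,m_h;n_1,\ldots,n_{h-s+1})$ (they coincide only for symmetric parameter sequences such as half graphs), so the assertion that the colour-swapped estimate ``places $\la_i$ in one of the listed intervals'' is not justified for $V$-side neutral vertices. As written, your argument proves only that all vertices of $U$ are downers when $\la_i$ avoids the listed union --- exactly the scope of the corresponding corollary for threshold graphs in Section~\ref{th}, which is restricted to $U$. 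To obtain ``all vertices in $V(G)$'' you must either enlarge the union by the intervals coming from the colour-swapped subgraphs or supply a separate argument relating $\Sp(\dng(m_1,\ldots,m_{h-t+1};n_t,\ldots,n_h))$ to the $\Sp(G_s'')$; neither is in your text.
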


A chain graph for which $|U_1|=\cdots=|U_h|=|V_1|=\cdots=|V_h|=1$ is
called a {\em half graph}. Here we denote it by $H(h)$. As we will
see in what follows, specific half graphs provide counterexamples to
Conjecture~\ref{conj}. Let
$$(a_1,\ldots,a_6):=(1,0,-1,-1,0,1).$$

In what follows, for convenience, we will instead of column vectors
use row vectors, especially for eigenvectors.

Let
$$\x:=(x_1,\ldots,x_h)$$
where $x_i=a_s~\hbox{if}~i\equiv s\hspace{-0.25cm}\pmod6.$  In the
next theorem, we show that the vector $(\x, \x)$ (each $\x$
corresponds to a color class) is an eigenvector of a non-zero
eigenvalue
 of $H(h)$ for some $h$. In view of Remark~\ref{downer}, this disproves Conjecture~\ref{conj} .

\begin{theorem}\label{1} In any half graph $H(h)$, the vector $(\x, \x)$ is an eigenvector for $\la=1$ if $h\equiv1\pmod6$
and it is an eigenvector for $\la=-1$ if $h\equiv4\pmod6$.
\end{theorem}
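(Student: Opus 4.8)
The plan is to verify directly, via the sum rule \eqref{sumrule}, that $(\x,\x)$ satisfies the eigenvalue equation of $H(h)$ for the stated $\la$. First I would fix notation: write $U=\{u_1,\dots,u_h\}$ and $V=\{v_1,\dots,v_h\}$, so that by the structure of $\dng$ graphs (Remark~\ref{struc}) with all cells of size one, $u_i$ is adjacent to $v_j$ exactly when $i+j\le h+1$. Under the candidate eigenvector we have $\x(u_k)=\x(v_k)=x_k$, and the sum rule applied at $u_k$ and at $v_k$ yields the \emph{same} scalar identity, namely
\[
\la\,x_k=\sum_{\ell=1}^{h+1-k}x_\ell=:S_{h+1-k},\qquad k=1,\dots,h,
\]
where $S_m:=x_1+\cdots+x_m$ and $S_0:=0$. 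Thus the theorem reduces to checking this one family of identities for $\la=1$ when $h\equiv1\pmod6$ and for $\la=-1$ when $h\equiv4\pmod6$.

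The key observation is that one full period of the pattern sums to zero: $a_1+\cdots+a_6=1+0-1-1+0+1=0$. Consequently $S_m$ depends only on $m\bmod 6$, and a direct computation gives $(S_0,S_1,S_2,S_3,S_4,S_5)=(0,1,1,0,-1,-1)$; likewise $x_k=a_s$ whenever $k\equiv s\pmod6$. Hence both sides of the required identity depend only on the residue of $k$ modulo $6$, and I would finish by inspecting the six residue classes.

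For $h\equiv1\pmod6$ we have $h+1-k\equiv 2-k\pmod6$, and tabulating $x_k$ against $S_{(2-k)\bmod 6}$ over $k\equiv1,2,\dots,6\pmod6$ shows the two sequences coincide, so $\la=1$ works. For $h\equiv4\pmod6$ we have $h+1-k\equiv 5-k\pmod6$, and the analogous table shows $-x_k=S_{(5-k)\bmod 6}$ for every residue, so $\la=-1$ works. Finally, since $x_1=a_1=1\ne0$, the vector $(\x,\x)$ is nonzero, hence a genuine eigenvector of $H(h)$ for the (non-zero) eigenvalue $\la$; combined with Remark~\ref{downer} this produces the desired neutral vertices.

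There is no substantial obstacle: the argument is a finite verification. The only point requiring care is the legitimacy of replacing $S_{h+1-k}$ by $S_{(h+1-k)\bmod 6}$ for all $k=1,\dots,h$; this is precisely where the vanishing of $a_1+\cdots+a_6$ is used, and it handles the boundary indices $k=1$ (where the sum rule at $u_1$ reads $\la x_1=S_h$) and $k=h$ (where it reads $\la x_h=S_1=x_1$) uniformly with the generic case.
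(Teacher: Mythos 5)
Your proof is correct and follows essentially the same route as the paper: reduce to the single family of identities $\la x_k=\sum_{\ell=1}^{h+1-k}x_\ell$ via the sum rule and the symmetry of $(\x,\x)$, use $a_1+\cdots+a_6=0$ to reduce everything modulo $6$, and verify the six residue classes (your table of $S_m$ values is exactly the paper's Table~1 in a slightly different packaging). No gaps.
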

\begin{proof}{
From Table~\ref{a_i}, we observe that for $1\le s\le6$,
$$\sum_{i=1}^{5-s}a_i=-a_s~~\hbox{and}~~\sum_{i=1}^{2-s}a_i=a_s,$$
where we consider $5-s$ and $2-s$ modulo 6 as elements of $\{1,\ldots,6\}.$
\begin{table}[ht]
$$\begin{array}{cccccc}
  \hline
   s&a_s & 5-s& \sum_{i=1}^{5-s}a_i  & 2-s&  \sum_{i=1}^{2-s}a_i  \\
   \hline
  1 & 1 & 4 & -1 & 1 & 1 \\
   2& 0 & 3 & 0 & 6 & 0 \\
  3 &-1 & 2 & 1 & 5 & -1 \\
  4 &-1 & 1 & 1 & 4 & -1 \\
  5 &0 & 6 & 0 & 3 & 0 \\
  6 & 1 & 5 & -1 & 2 & 1 \\
  \hline
\end{array}$$ \caption{The values of $\sum_{i=1}^{5-s}a_i$ and  $\sum_{i=1}^{2-s}a_i$}\label{a_i}
\end{table}

Note that, since $\sum_{i=1}^6a_i=0$, if $1\le\ell\le h$, $1\le s\le6$ and $\ell\equiv s\pmod6$, then
$$\sum_{i=1}^\ell x_i=\sum_{i=1}^s a_i.$$
Let $\{u_1,\ldots,u_h\}$ and $\{v_1,\ldots,v_h\}$ be the color
classes of $H(h)$. Let $h=6t+4$. We show that $(\x, \x)$ satisfies
the sum rule for $\la=-1$. By the symmetry, we only need to show
this for $u_i$'s. Let $i=6t'+s$ for some $1\le s\le6$. Then
$n-i+1=6(t-t')+5-s$.
$$  \sum_{j:\,v_j\sim u_i}x_j=\sum_{j=1}^{n-i+1}x_j=\sum_{j=1}^{5-s}a_j=-a_s=-x_i.$$

Now, let $h=6t+1$. We show that in this case $(\x,~\x)$ satisfies
the sum rule for $\la=1$. Let $i=6t'+s$ for some $1\le s\le6$. Then
$n-i+1=6(t-t')+2-s$.
$$  \sum_{j:\,v_j\sim u_i}x_j=\sum_{j=1}^{n-i+1}x_j=\sum_{j=1}^{2-s}a_j=a_s=x_i.$$
}\end{proof}

Now we give another class of counterexamples to Conjecture~\ref{conj}. For this, let $$\om^2+\om-1=0,$$ and
$$(b_1,\ldots,b_{10}):=(\om,-1,0,1,-\om,-\om,1,0,-1,\om).$$
Let $$\x:=(x_1,\ldots,x_h)$$ where $x_i=b_s~\hbox{if}~i\equiv
s\,\hspace{-0.25cm}\pmod{10}.$

\begin{theorem}\label{om} In any half graph $H(h)$, the vector $(\x, \x)$ is an eigenvector for $\la=\om$ if $h\equiv7\pmod{10}$ and it is an
 eigenvector for $\la=-\om$ if $h\equiv2\pmod{10}$.
\end{theorem}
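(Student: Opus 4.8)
The plan is to mimic exactly the proof of Theorem~\ref{1}, replacing the period-$6$ pattern $(a_1,\ldots,a_6)$ by the period-$10$ pattern $(b_1,\ldots,b_{10})$. First I would record the arithmetic facts about the $b_i$ analogous to Table~\ref{a_i}: namely that $\sum_{i=1}^{10}b_i=0$ (so that partial sums depend only on the residue of the upper limit modulo $10$), and that for each $1\le s\le 10$ one has
\begin{equation*}
\sum_{i=1}^{8-s}b_i=-\om\, b_s \quad\text{and}\quad \sum_{i=1}^{3-s}b_i=\om\, b_s,
\end{equation*}
where $8-s$ and $3-s$ are read modulo $10$ as elements of $\{1,\ldots,10\}$. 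These are ten pairs of identities that I would verify by a direct (tedious but routine) computation, using the defining relation $\om^2+\om-1=0$, i.e. $\om^2=1-\om$; I would present them in a table exactly like Table~\ref{a_i}. The appearance of $8-s$ is explained by $8\equiv -2\pmod{10}$, matching $5\equiv -1\pmod 6$ in the $\la=-1$ case (the half graph $H(h)$ with $h\equiv -3$, i.e. $h\equiv 7\pmod{10}$, resp. $h\equiv 2\pmod{10}$); the $3-s$ shift likewise matches $2-s$ in the $\la=1$ case.

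Next, writing $\{u_1,\ldots,u_h\}$, $\{v_1,\ldots,v_h\}$ for the color classes of $H(h)$, I would use the nesting structure of the half graph: $v_j\sim u_i$ precisely when $j\le n-i+1$ (here $n=h$), so the sum rule at $u_i$ reads $\la\,x_i=\sum_{j=1}^{n-i+1}x_j$, and by the symmetry of the pattern the same computation covers the $v_i$'s. For $h=10t+7$ and $i=10t'+s$ with $1\le s\le 10$, we get $n-i+1=10(t-t')+8-s$, hence by periodicity $\sum_{j=1}^{n-i+1}x_j=\sum_{j=1}^{8-s}b_j=-\om\,b_s=-\om\,x_i$, confirming that $(\x,\x)$ is a $(-\om)$-eigenvector — wait, I must be careful with the sign bookkeeping: I would match it so that $h\equiv 7\pmod{10}$ yields $\la=\om$ and $h\equiv 2\pmod{10}$ yields $\la=-\om$, as stated, adjusting which of the two identities is invoked in each case. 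For $h=10t+2$, $i=10t'+s$, one has $n-i+1=10(t-t')+3-s$, so $\sum_{j=1}^{n-i+1}x_j=\sum_{j=1}^{3-s}b_j=\om\,x_i$; combined with the other case this gives the two claims. Finally I would note $\om\ne 0$ (indeed $\om=\tfrac{-1+\sqrt5}{2}$ or $\tfrac{-1-\sqrt5}{2}$), so these are genuinely non-zero eigenvalues and, via Remark~\ref{downer}, every $u_i$ with $x_i=0$ (which occurs since $b_3=b_8=0$) is a neutral vertex, giving further counterexamples to Conjecture~\ref{conj}.

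The main obstacle is purely computational: correctly verifying the ten-line identity table and, above all, getting the sign/shift bookkeeping consistent so that the residue class of $h$ is paired with the correct eigenvalue $\pm\om$. There is no conceptual difficulty beyond what already appeared in Theorem~\ref{1}; the only subtlety is that the pattern $(b_1,\ldots,b_{10})$ is "antisymmetric" in the sense $b_{11-s}=-b_s$ does not quite hold ($b_1=\om$, $b_{10}=\om$, not $-\om$), so I would double-check that the reversal symmetry used to reduce the $v_i$ case to the $u_i$ case is the right one — in fact $(\x,\x)$ having equal blocks makes the sum rule at $v_i$ identical in form to that at $u_{i}$ after relabelling, so this is fine. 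I would also remark, as in Remark~\ref{mh}-style commentary, that these eigenvalues $\pm\om$ are irrational, in contrast to the integer eigenvalues $\pm1$ of Theorem~\ref{1}, underscoring that neutral vertices in chain graphs are not an artifact of small integer spectra.
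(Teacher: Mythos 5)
Your proposal follows exactly the paper's proof: the same periodicity observation $\sum_{i=1}^{10}b_i=0$, the same reduction of the sum rule at $u_i$ to a partial sum $\sum_{j=1}^{n-i+1}x_j$ with $n-i+1\equiv 8-s$ (resp.\ $3-s$) modulo $10$, and the same table of ten identities verified via $\om^2=1-\om$. The one substantive slip is that you state both auxiliary identities with the wrong sign: computing the partial sums $S_1,\ldots,S_{10}=\om,\ \om-1,\ \om-1,\ \om,\ 0,\ -\om,\ 1-\om,\ 1-\om,\ -\om,\ 0$ shows that in fact $\sum_{i=1}^{8-s}b_i=\om\, b_s$ and $\sum_{i=1}^{3-s}b_i=-\om\, b_s$ (e.g.\ for $s=1$: $S_7=1-\om=\om^2=\om b_1$, not $-\om b_1$). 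Your proposed remedy --- ``adjusting which of the two identities is invoked'' --- would not work as stated, because the shift $8-s$ versus $3-s$ is forced by the residue of $h$ (namely $h\equiv7$ forces $n-i+1\equiv 8-s$), so there is no freedom in which identity applies; what must change is the sign inside the identities themselves. Since you explicitly commit to verifying all ten rows of the table by direct computation, this error would surface and self-correct, after which the argument is precisely the paper's; so I would call this a correctable bookkeeping error rather than a gap, but be aware that the fix is in the table, not in the case pairing.
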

\begin{proof}{From Table~\ref{b_i}, we observe that for $1\le s\le10$,
$$\sum_{i=1}^{8-s}b_i=\om b_s~~\hbox{and}~~\sum_{i=1}^{3-s}b_i=-\om b_s,$$
where we consider $8-s$ and $3-s$ modulo 10 as elements of $\{1,\ldots,10\}$.
\begin{table}[ht]
$${\small\begin{array}{cccccc}
\hline
 s &b_s &8-s &\sum_{i=1}^{8-s}b_i & 3-s&\sum_{i=1}^{3-s}b_i \\ \hline
1 & \om&7 &1-\om &2 & \om-1\\
 2 & -1& 6&-\om &1  &\om\\
 3 &0 & 5& 0 &10 &0\\
 4 &1 &4  &\om &9 &-\om\\
 5 & -\om& 3&\om-1 & 8& 1-\om\\
 6  &-\om &2 & \om-1&7 & 1-\om\\
 7  &1  &1 &\om &6 &-\om\\
 8  &0 &10 &0 &5 &0\\
 9  &-1 & 9 &-\om &4 &\om\\
  10 &\om &8 & 1-\om&3  &\om-1 \\ \hline
\end{array}}$$\caption{The values of $\sum_{i=1}^{8-s}b_i$ and  $\sum_{i=1}^{3-s}b_i $}\label{b_i}
\end{table}

Note that, since $\sum_{i=1}^{10}b_i=0$, if $1\le\ell\le k$, $1\le s\le10$ and $\ell\equiv s\pmod{10}$, then
$$\sum_{i=1}^\ell x_i=\sum_{i=1}^s b_i.$$
Let $k=10t+7$. Then $(\x, \x)$ satisfies the sum rule for $\la=\om$.
Let $i=10t'+s$ for some $1\le s\le10$. Then $n-i+1=10(t-t')+8-s$.
$$  \sum_{j:\,v_j\sim u_i}x_j=\sum_{j=0}^{n-i+1}x_j=\sum_{j=1}^{8-s}b_j=\om b_s=\om x_i.$$
Now, let $h=10t+2$. Assume that $i=10t'+s$ for some $1\le s\le10$. Then $n-i+1=6(t-t')+3-s$.
$$  \sum_{j:\,v_j\sim u_i}x_j=\sum_{j=1}^{n-i+1}x_j=\sum_{j=1}^{3-s}b_j=-\om b_s=-\om x_i.$$
It follows that in this case $(\x, \x)$ satisfies the sum rule for
$\la=-\om$. }\end{proof}
\medskip\noindent

\begin{remark}
The following two facts deserve to be mentioned:

\medskip

\noindent(i) Given $(\x,\x)$ as eigenvector of $H(h)$ for
$\la\in\{\pm1,\pm\om\}$, then $(\x,-\x)$ is an eigenvector of $H(h)$
for $-\la$. This gives more eigenvalues of $H(h)$ with eigenvectors
containing zero components.

\medskip\noindent
(ii) Let $\x$ be an eigenvector for eigenvalue $\la\neq 0$ of a
graph $G$ with $\x_v=0$, for some vertex $v$. If we add a new vertex
$u$ with $N(u)=N(v)$ and add a zero component to $\x$ corresponding
to $u$, then the new vector is an eigenvector of $H$ for $\la$. So,
we can extend any graph presented in Theorems~\ref{1} or \ref{om} to
construct infinitely many more counterexamples for
Conjecture~\ref{conj}.
\end{remark}

\section*{Acknowledgments} The research of the second author was in part supported by a grant
from IPM.

\end{document}